\documentclass[12pt]{amsart}
\usepackage[latin1]{inputenc}
\usepackage[english]{babel}
\usepackage{amsmath,amssymb,amsthm,amsfonts, color}
\setcounter{MaxMatrixCols}{10}

\usepackage{enumerate}
\usepackage{graphicx}
\usepackage{latexsym}
\usepackage[all]{xy}
\usepackage{tikz}

\makeatletter
\@namedef{subjclassname@2010}{ \textup{2010} Mathematics Subject Classification}
\makeatother

\voffset-2.5 cm
\hoffset -1.5 cm
\textwidth 16 cm
\textheight 24 cm
\pagestyle{plain}

\newtheorem{theorem}{Theorem}
\newtheorem{corollary}{Corollary}
\newtheorem{propos}{Proposition}
\newtheorem{rem}{Remark}

\newtheorem{lemma}{Lemma}

\theoremstyle{definition}




\begin{document}
\title{Disjointly homogeneous Orlicz spaces revisited.}
\thanks{{\rm *}The author has been supported the Ministry of Science and Higher Education of the Russian Federation (project 1.470.2016/1.4) and by the RFBR grant 18--01--00414.}
\author[Astashkin]{Sergey V. Astashkin}
\address[Sergey V. Astashkin]{Department of Mathematics, Samara National Research University, Moskovskoye shosse 34, 443086, Samara, Russia
}
\email{\texttt{astash56@mail.ru}}
\maketitle

\vspace{-7mm}

\begin{abstract}
Let $1\le p\le\infty$. A Banach lattice $X$ is said to be $p$-disjointly homogeneous or $(p-DH)$ (resp. restricted $(p-DH)$) if every normalized disjoint sequence in $X$ (resp. every normalized sequence of characteristic functions of disjoint subsets) contains a subsequence equivalent in $X$ to the unit vector basis of $\ell_p$. We revisit $DH$-properties of Orlicz spaces and  refine some  previous results of this topic, showing that $(p-DH)$-property is not stable under duality in the class of Orlicz spaces and the classes of restricted $(p-DH)$  and $(p-DH)$ Orlicz spaces are different. Moreover, we give a characterization of uniform $(p-DH)$ Orlicz spaces and establish also closed connections between this property and the duality of $DH$-property.
\end{abstract}

\footnotetext[1]{2010 {\it Mathematics Subject Classification}: 46B70, 46B42.}
\footnotetext[2]{\textit{Key words and phrases}: Banach lattice, symmetric space, Orlicz function, Orlicz space, $\ell_p$-spaces, disjoint functions, disjoint homogeneous symmetric space}

\section{\protect \medskip Introduction.}
\label{Intro}

A Banach lattice $X$ is called {\it disjointly homogeneous} (briefly, $DH$) if two any normalized disjoint sequences in $X$ contain equivalent subsequences. In particular, given a $p\in[1,\infty]$, a Banach lattice $X$ is  {\it $p$-disjointly homogeneous} (briefly, $(p-DH)$) if each  normalized disjoint sequence in $X$ has a subsequence equivalent to the unit vector basis of $\ell_p$ ($c_0$ when $p=\infty$). These notions were introduced explicitly first in the paper \cite{FTT-09} and proved to be very useful in studying the general problem of identifying Banach lattices $X$ such that the ideals of strictly singular and compact operators bounded in $X$ coincide (see also \cite{bib:04}, \cite{HST},  \cite{A-20}, the survey \cite{FHT-survey} and references therein). Results obtained in these papers can be treated as a continuation and substantial development of a classical theorem of V.~D.~Milman \cite{Mil-70} which states that every strictly singular operator in $L_p(\mu)$ has compact square. This was a motivation to find out how large is the class of $DH$ Banach lattices. As it is known, it contains $L_p(\mu)$-spaces, $1 \le p \le\infty$, Lorentz function spaces $L_{q,p}$ and $\Lambda(W, p)$, a certain class of Orlicz function spaces, Tsirelson space and some others. 

Another direction of research was connected with studying the $DH$-property itself and some its versions.
In particular, a special attention has been paid to investigation of the duality problem in the class of $DH$ Banach lattices (a systematic study of this subject was undertaken in the work \cite{FHSTT}). First, if $X$ is an $(\infty-DH)$ Banach lattice, then $X^*$ is a $(1-DH)$ Banach lattice as well \cite[Theorem~2.2]{FTT-09}. In contrast to that, in the same paper it was showed  that the Lorentz space $L_{p,1}[0,1]$, $1<p<\infty$, is $(1-DH)$ but the dual   $L_{q,\infty}[0,1]$, $1/p+1/q=1$, fails to be an $(\infty-DH)$ space. What concerns with Orlicz spaces, in \cite[p.~5863]{FHSTT} and \cite[p.~6]{FHT-survey}, basing on  \cite[Theorem 4.1]{bib:04}, the authors have asserted that if the Orlicz space  $L_F$ is $DH$, then $L_F^*$ is $DH$ as well.  Moreover, in \cite[p.~5877]{FHSTT} (see also Question~3 in the survey \cite{FHT-survey}) it was asked if the dual to a reflexive $p$-DH symmetric space on $[0,1]$ is $DH$. This question was answered in negative recently in \cite{A19a}. 

One more issue relates is a weaker property of restricted disjoint homogeneity that was introduced (for $p=2$) in the paper \cite{HST}. A symmetric space $X$ on $[0,1]$ is said to be {\it restricted $p$-disjointly homogeneous} (in brief, restricted $(p-DH)$) if every sequence of normalized disjoint characteristic functions contains a subsequence equivalent to the unit vector basis of $\ell_p$. Clearly, each $(p-DH)$ space is restricted $(p-DH)$. In \cite{HST}, the authors have proved the converse for Orlicz spaces \cite[Theorem~5.1]{HST} and also asked whether a symmetric space $X$ on $[0,1]$, which is restricted $(p-DH)$, must be $(p-DH)$. This question (repeated also in \cite[p.~19]{FHT-survey}) was  motivated by the fact that restricted $(p-DH)$ spaces have rather "good"\;properties (see \cite{HST} and \cite{A19b}). Answering it, in \cite{A19b}, for every $1\le p<\infty$, various examples of restricted $(p-DH)$ symmetric spaces that are not $(p-DH)$ were given.

The first purpose of this paper is to revisit $DH$-properties of Orlicz spaces and to refine some  previous results of this topic proved in the papers  \cite{bib:04,FHSTT,FHT-survey,HST}. Unfortunately, the proof of Theorem 4.1 from \cite{bib:04} contains a gap, and moreover, as we will see, this result and some others, which used it, turned to be not true in full generality. In particular, if $1<p<\infty$, the classes of restricted $(p-DH)$ and $(p-DH)$ Orlicz spaces are in fact different (see Proposition~\ref{prop1}, Theorem~\ref{th3new} and Corollary~\ref{restricted and not}). Furthermore, $(p-DH)$- property is not stable under duality in the class of Orlicz spaces both in the reflexive ($1<p<\infty$) and non-reflexive ($p=1$) cases (see Theorems~\ref{th2} and \ref{th2new}). Roughly speaking, the results obtained here indicate that $DH$-properties of Orlicz spaces are much richer and more non-trivial than one can see in the above papers. A crucial role in recovering these issues will be played by an example of the Orlicz function given in Proposition~\ref{prop3}. 


Our another ("positive")\:aim is to study the uniform $DH$-property for Orlicz spaces, which means that the constant of equivalence of subsequences in the definition of $DH$-property can be chosen uniformly for all normalized disjoint sequences. We give a characterization of uniform $(p-DH)$ Orlicz spaces in Theorem~\ref{prop2} when $1\le p<\infty$, and then in Corollary~\ref{cor2new} for $p=\infty$. From these results it follows that, for every $1\le p<\infty$, there is a $(p-DH)$ Orlicz space, which is not uniformly $(p-DH)$ (see also Corollary~\ref{th3}). In a sharp contrast to that, each $(\infty-DH)$ Orlicz space is uniformly $(\infty-DH)$ as well (Corollary~\ref{cor2new}). We establish also closed connections between this property and the duality of  $DH$-property in Theorem~\ref{th1} (in the non-reflexive case) and in Theorem~\ref{th1new} (in the reflexive case). Observe that analogous results in a special case of Banach lattices ordered by basis have been obtained earlier in the paper \cite{FHSTT}.

The author would like to thank Professors D.~Freeman, F.~L.~Hernandez, and E.~M.~Semenov for useful discussions on various issues of the topic.

\section{\protect Preliminaries}\label{Prel}

In this section, we will briefly list the definitions and notions used throughout this paper. For more detailed information, we refer to the monographs \cite{LT1,LT-79,KPS,KR,Mal}.

\subsection{\protect Symmetric spaces, Orlicz functions and spaces}
\label{Orlicz}
A {\it Banach function  lattice} $X$ on the interval $[0,1]$ is a Banach space of real--valued Lebesgue measurable functions (of equivalence classes) defined 
on $[0,1]$, which satisfies the ideal property: if $x$ is a measurable function, $|x| \leq |y|$ almost everywhere (a.e.) with respect to the Lebesgue measure $m$ on $[0,1]$ and $y \in X$, then $ x\in X$ and $\|x\|_X \leq \|y\|_X$.

A Banach function  lattice $X$ on $[0,1]$ is said to be a {\it symmetric} ({or \it rearrangement invariant}) space if from the conditions:  $y\in X$, $x$ is a measurable function on $[0,1]$, with $x^*(t)=y^*(t)$, $0<t\le 1$, it follows that $x\in X$ and $\|x\|_{X} =\|y\|_{X}$. Here, $x^*$ denotes the non-increasing, right-continuous rearrangement of a measurable function $x$ on $[0,1]$ given by
$$
x^{*}(t):=\inf \{~s\ge 0:\,m\{u\in [0,1]: |x(u)|>s\}\le t~\},\quad t>0.
$$

For any symmetric space  $X$ on $[0,1]$ we have $L_\infty [0,1]\subseteq X\subseteq L_1[0,1]$. The fundamental function $\phi_X$ of a symmetric space $X$ is defined by  $\phi_X(t):=\|\chi_{[0,t]}\|_X$, $0\le t\le 1$. In what follows, $\chi_A$ is the characteristic function of a set $A$. If $X$ is a symmetric space, then $X^0$ is the closure of $L_\infty$ in $X$. Then $X^0$ is a symmetric space, which is separable provided that $X\ne L_\infty$.

The {\it K\"othe dual} (or the {\it associated} space) $X'$ of a symmetric space $X$  consists of all measurable functions $y$ such that
$$
\Vert y\Vert _{X'}:= \sup \Big\{\int _{0}^1\vert
x(t)y(t)\vert dt:\ x\in X,\ \Vert x \Vert _{X}\leq 1\Big\}<\infty.
$$
If $X$ is separable, then $X'$ coincides with the (Banach) dual space $X^*$. 

The most known and important symmetric spaces are the $L_p$-spaces, $1\le p\le\infty$. Their natural generalization is the Orlicz spaces. Let $F$ be an Orlicz function, that is, an increasing convex function on $[0, \infty)$ such that $F(0) = 0$ and $\lim_{t\to\infty}F(t)=\infty$. Denote by $L_{F}:=L_F[0,1]$ the {\it Orlicz space}  
on $[0,1]$ (see e.g. \cite{KR}) endowed with the Luxemburg--Nakano norm
$$
\| f \|_{L_{F}} = \inf \{\lambda > 0 \colon \int_0^1 F(|f(t)|/\lambda) \, dt \leq 1 \}.
$$
In particular, if $F(u)=u^p$, $1\le p<\infty$, we obtain $L_p$. The fundamental function $\phi_{L_F}(u)=1/F^{-1}(1/u)$, $0<u\le 1$, where $F^{-1}$ is the inverse function.

If $F$ is an Orlicz function, then the Young conjugate function $G$ is defined by
$$
G(u):=\sup_{t>0}(ut-F(t)),\;\;u>0.$$
Note that $G$ is also an Orlicz function and the Young conjugate for $G$ is $F$.

Similarly, we can define an {\it Orlicz sequence space}. Specifically, the space $\ell_{\varphi}$, where $\varphi$ is an Orlicz function, consists of all sequences $(a_{k})_{k=1}^{\infty}$ such that 
$$
\| (a_{k})_{k=1}^{\infty}\|_{\ell_{\varphi}} := \inf\left\{u>0: \sum_{k=1}^{\infty} \varphi \Big( \frac{|a_{k}|}{u} \Big)\leq 1\right\}<\infty.
$$

An Orlicz function $H$ satisfies the $\Delta_{2}^{\infty}$-condition ($H\in \Delta_{2}^{\infty}$) (resp. the $\Delta_{2}^{0}$-condition ($H\in \Delta_{2}^{0}$)) if 
$$
\limsup\limits_{t \to \infty}\frac{H(2t)}{H(t)} < \infty\;\;\mbox{(resp.}\;\;\limsup\limits_{t \to 0}\frac{H(2t)}{H(t)} < \infty).
$$
It is well known that an Orlicz function space $L_{F}$ on $[0,1]$ (resp. an Orlicz sequence space $\ell_{\varphi}$) is separable if and only if  $F\in \Delta_{2}^{\infty}$ (resp. $\varphi \in \Delta_{2}^{0}$). In this case we have  $L_F^*=L_F'=L_G$, where $G$ is the Young conjugate function for $F$ (resp. $\ell_{\varphi}^*=\ell_{\varphi}'=\ell_{\psi}$, with the Young conjugate function  $\psi$ for $\varphi$).

One can easy see (cf. \cite[Proposition~4.a.2]{LT1}) that the canonical unit vectors $e_n=(e_n^i)$, $e_n^i=\delta_{n,i}$, $n,i=1,2,\dots$, form a symmetric basis of an Orlicz sequence space $\ell_{\varphi}$ provided if $\varphi \in \Delta_{2}^{0}$. Recall that a basis $\{x_n\}_{n=1}^\infty$ of a Banach space $X$ is called {\it symmetric} if there exists $C>0$ such that for arbitrary permutation $\pi:\,\mathbb{N}\to\mathbb{N}$ and any $a_n\in\mathbb{R}$ we have
$$
C^{-1}\Big\|\sum_{n=1}^{\infty}a_nx_n\Big\|_X\le \Big\|\sum_{n=1}^{\infty}a_nx_{\pi(n)}\Big\|_X\le C\Big\|\sum_{n=1}^{\infty}a_nx_n\Big\|_X.$$

Observe that the definition of an Orlicz sequence space $\ell_{\varphi}$ depends (up to equivalence of norms) only on the behaviour of the function $\varphi$ near zero. More precisely, if $\varphi,\psi \in \Delta_{2}^{0}$, the following conditions are equivalent: (1) $\ell_{\varphi}=\ell_{\psi}$ (with equivalence of norms); 2) the unit vector bases of the spaces $\ell_{\varphi}$ and $\ell_{\psi}$ are equivalent; 3) there are $C > 0$ and $t_{0} > 0$ such that for all $0 \leq t \leq t_{0}$ it holds
$$
C^{-1}\varphi(t) \leq \psi(t) \leq C\varphi(t)
$$ 
(cf. \cite[Proposition~4.a.5]{LT1} or \cite[Theorem~3.4]{Mal}). In the case when $\varphi$ is a {\it generate} Orlicz function, i.e., there is $t> 0$ such that $\varphi(t)=0$, we have $\ell_{\varphi}=\ell_\infty$ (with equivalence of norms).

Quite similarly, the definition of an Orlicz function space $L_F$ on $[0,1]$ depends only on the behaviour of the function $F$ for large values of argument  $t$.


Let $F$ be an Orlicz function, $F\in \Delta_{2}^{\infty}$. Define the following subsets of the space $C[0, 1]$:

$$
E_{F, A}^{\infty}: = \overline{\big\{ G(x) = \frac{F(xy)}{F(y)} \ : y > A \big\}}\;(A>0), 
E_{F}^{\infty}: = \bigcap_{A > 0}E_{F, A}^{\infty}, \ \ C_{F}^{\infty}: = \overline{{\rm conv}\, E_{F}^{\infty}},
$$
where ${\rm conv}\,U$ is the convex hull of a set $U$ and the closure is taken in the space $C[0,1].$ All these sets are non-void, compact in $C[0,1]$ and consist of Orlicz functions \cite[Lemma~4.a.6]{LT1}. It is well known that the sets $E_{F}^{\infty}$ and $C_{F}^{\infty}$ rather fully determine the structure of disjoint sequences of $L_F$ (see \cite[\S\,4.a]{LT1}, \cite{LTIII}).




Let $F$ be an Orlicz function, $\varphi$ be a function defined on $[0,1]$. We will write $E_F^\infty\cong\{\varphi\}$ if for every $H\in E_F^\infty$ there is a constant $C=C(H)$ such that
\begin{equation}
\label{eq1}
C^{-1}\varphi(t)\le H(t)\le C\varphi(t),\;\;0<t\le 1.
\end{equation}
In the case when this condition is fulfilled uniformly for all $H\in E_F^\infty$, i.e., there is a constant $C>0$ such that \eqref{eq1} holds for each $H\in E_F^\infty$ we will write $E_F^\infty\equiv\{\varphi\}$. In a similar way, we understand the expressions $C_F^\infty\cong\{\varphi\}$ and $C_F^\infty\equiv\{\varphi\}$. 

Let $E_F^\infty\cong\{\varphi\}$. Suppose that $\varphi$ is a non-generate function, i.e., $\varphi(t)>0$ for $t>0$. Then $F$ is quasi-multiplicative, that is, $C^{-1}\le F(uv)/F(u)F(v)\le C$ for some $C>0$ and all $0\le u,v\le 1$ (see the proof of Theorem~4.1 in \cite{bib:04}). Therefore, by a classical result of Polya, there is $1\le p<\infty$ such that $\varphi(u)$ is equivalent on $[0,1]$ to the function $u^p$. 

It is obvious that from $E_F^\infty\equiv\{\varphi\}$ it follows that $C_F^\infty\equiv\{\varphi\}$. What is about a weaker condition $E_F^\infty\cong\{\varphi\}$? We will see further that in this case may be both situations: $C_F^\infty\cong\{\varphi\}$ and $C_F^\infty\not\cong\{\varphi\}$.

An Orlicz function $F$ such that $F\in \Delta_{2}^{\infty}$ is said to be {\it regularly varying at $\infty$} (in sense of Karamata) if the limit $\lim_{t\to\infty}F(tu)/F(t)$ exists for all $u>0$ (in fact, it suffices that the limit exists when $0<u\le 1$). Then, there is a $p$, $1\le p<\infty$, such that $\lim_{t\to\infty}F(tu)/F(t)=u^p$; in this case $F$ is called {\it regularly varying of order $p$}.

\subsection{\protect Properties of disjoint sequences in Banach lattices.}
\label{prel2}


Following to \cite{FTT-09}, a Banach lattice $X$ is said to be {\it disjointly homogeneous} (in brief, $DH$) if two arbitrary normalized disjoint sequences $\{x_n\}$ and $\{y_n\}$ from $X$ contain subsequences $\{x_{n_k}\}$ and $\{y_{n_k}\}$ that are $C$-equivalent in $X$ for some $C>0$. As usual, this means that for all $c_k\in\mathbb{R}$ we have
$$
C^{-1}\Big\|\sum_{k=1}^\infty c_kx_{n_k}\Big\|_X\le \Big\|\sum_{k=1}^\infty c_ky_{n_k}\Big\|_X\le C\Big\|\sum_{k=1}^\infty c_kx_{n_k}\Big\|_X.$$
If there exists a constant $C$, which is suitable for each pair of normalized disjoint sequences $\{x_n\}$ and $\{y_n\}$, then we will say that $X$ is {\it uniformly $DH$} \cite{FHSTT}.

Clearly, every uniformly $DH$ Banach lattice is $DH$. In the paper \cite{FHSTT}, it is shown that there are $DH$ Banach lattices ordered by basis that are not being uniformly $DH$ (a {\it Banach lattice ordered by basis} is a Banach lattice $X$ with a basis $\{x_n\}$ such that $\sum_{n=1}^\infty a_nx_n\ge 0$ if and only if $a_n\ge 0$ for all $n$; see \cite{FHSTT}).

In particular, given a $p\in[1,\infty]$, a Banach lattice $X$ is called  {\it $p$-disjointly homogeneous} (briefly, $(p-DH)$) if each  normalized disjoint sequence in $X$ has a subsequence equivalent to the unit vector basis of $\ell_p$ ($c_0$ when $p=\infty$). In the case when the constant $C$ of this equivalence can be chosen uniformly for all normalized disjoint sequences, $X$ is called uniformly $(p-DH)$. 

As was mentioned in Section~\ref{Intro}, $DH$ property is enjoyed, in particular, by $L_p(\mu)$-spaces, $1 \le p \le\infty$, Lorentz spaces $L_{q,p}$ and $\Lambda(W, p)$, certain Orlicz spaces, Tsirelson space and some others. Moreover, by using the complex method of interpolation, it was proved, in \cite{A15}, that for every $1 \le p \le\infty$ and any increasing concave function $\varphi(u)$ on $[0,1]$, which is not equivalent to neither $1$ nor $u$, there exists a $(p-DH)$ symmetric space on $[0,1]$ with the fundamental function $\varphi$.

In \cite{HST}, for $p=2$ the following weaker notion has been introduced. Let $1\le p\le\infty$. A symmetric space $X$ is said to be {\it restricted $(p-DH)$} if for every sequence of pairwise disjoint subsets $\{A_n\}_{n=1}^\infty$ of $[0,1]$ there is a subsequence $\{A_{n_k}\}$ such that $\{\frac{1}{\|\chi_{A_{n_k}}\|_X} \chi_{A_{n_k}}\}$ is equivalent to the unit vector basis of $\ell_p$ ($c_0$ when $p=\infty$). As above, we can define also {\it uniformly restricted} $(p-DH)$ symmetric spaces.

Clearly, every $(p-DH)$ symmetric space is restricted $(p-DH)$. On the other hand, for each $1\le p<\infty$, there exist restricted $(p-DH)$ symmetric spaces on $[0,1]$, which fail to be $(p-DH)$ (see \cite{A19b}). 

A symmetric space $X$ is called {\it disjointly complemented} (in brief, $DC$) if every disjoint sequence in $X$ has a subsequence whose span is complemented in $X$ (see \cite{FHSTT}).

For more detailed information related to $DH$- and $(p-DH)$-properties see \cite{FTT-09,bib:04,FHSTT,FHT-survey,HST,A15,A19a,A19b}.

The notation   $A\asymp B$ means that there exists a constant $C>0$ that does not depend on the arguments of $A$ and $B$ such that $C^{-1}{\cdot}A\le B\le C{\cdot}A$. 
Finally, throughout the paper, we set $\log a:=\log_2 a$, where $a>0$, and $\bar{\chi}_{A}:={\chi}_{A}/\|{\chi}_{A}\|.$

\section{\protect \medskip A special Orlicz function.}

The following result will play a key role in our further considerations.

\begin{propos}
\label{prop3}
There exists an Orlicz function $\Phi$ on $[0,\infty)$ such that $C_\Phi^\infty\cong\{t\}$ and $E_\Phi^\infty\not\equiv\{t\}$.
\end{propos}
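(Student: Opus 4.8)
The plan is to pass to convenient coordinates, reduce both halves of the statement to a single numerical invariant of the limit functions, and then build $\Phi$ by prescribing its behaviour on a logarithmic scale. Recall that every $H\in E_\Phi^\infty$ — and, these properties being preserved under convex combinations and uniform limits, every $H\in C_\Phi^\infty$ — is convex with $H(0)=0$ and $H(1)=1$; consequently $t\mapsto H(t)/t$ is nondecreasing on $(0,1]$, so $H(t)\le t$ there and
$$
d(H):=\lim_{t\to 0^+}\frac{H(t)}{t}=\inf_{0<t\le 1}\frac{H(t)}{t}\in[0,1].
$$
Thus $C^{-1}t\le H(t)\le C\,t$ on $(0,1]$ holds iff $d(H)>0$, with best constant $C\asymp 1/d(H)$. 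Hence $E_\Phi^\infty\cong\{t\}$ means exactly that $d(H)>0$ for all $H\in E_\Phi^\infty$, while $E_\Phi^\infty\not\equiv\{t\}$ means $\inf\{d(H):H\in E_\Phi^\infty\}=0$; likewise for $C_\Phi^\infty$.

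First I would reduce the assertion about $C_\Phi^\infty$ to one about $E_\Phi^\infty$. Since $E_\Phi^\infty$ is compact in $C[0,1]$, every $g\in C_\Phi^\infty=\overline{\mathrm{conv}}\,E_\Phi^\infty$ is the barycentre of a Borel probability measure $\nu$ on $E_\Phi^\infty$, i.e. $g(t)=\int H(t)\,d\nu(H)$ for each $t$. For fixed $t$ the ratio $H(t)/t$ is bounded by $1$ and decreases to $d(H)$ as $t\to 0^+$; monotone convergence then yields $d(g)=\int d(H)\,d\nu(H)$. In particular, if $d>0$ everywhere on $E_\Phi^\infty$ then $d(g)>0$ for every $g\in C_\Phi^\infty$. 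Therefore it suffices to construct an Orlicz function $\Phi\in\Delta_2^\infty$ with $d(H)>0$ for all $H\in E_\Phi^\infty$ but $\inf_{H\in E_\Phi^\infty}d(H)=0$.

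For the construction I would work on a logarithmic scale. Write $g(s)=\log\Phi(2^{s})=s+G(s)$, where $G\ge 0$ is nondecreasing, $G(-\infty)=0$ and $0\le G'\le\sigma_1$ (the last bound forcing $\Phi\in\Delta_2^\infty$). For $x=2^{\xi}\in(0,1]$ one has $\Phi(2^{s}x)/\Phi(2^{s})=x\cdot 2^{-(G(s)-G(s+\xi))}$, so the members of $E_\Phi^\infty$ are the functions $x\mapsto x\,2^{-\gamma(\log x)}$ arising as limits along $s_k\to\infty$ of the recentred increments $\gamma(\xi)=\lim_k\bigl(G(s_k)-G(s_k+\xi)\bigr)$, with $d(H)=2^{-\gamma(-\infty)}$. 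I would build $G$ from well-separated \emph{events}: for every $m\ge 1$ a ramp of slope $\sigma_m$ and length $L_m$ with $\sigma_mL_m=m$ and $\sigma_m\to 0$; each value of $m$ is used infinitely often, and consecutive events are separated by flat gaps whose lengths tend to $\infty$. Sitting at depth $L_m-u$ below the top of a mass-$m$ event produces the profile $\gamma(\xi)=\sigma_m\min(|\xi|,u)$, i.e. the function $P_{m,u}(x)=x\,2^{-\sigma_m\min(|\log x|,u)}$, which recurs (the mass-$m$ events do) and has $d(P_{m,u})=2^{-\sigma_mu}>0$. The full events give $d(P_{m,L_m})=2^{-m}\to 0$, whence $\inf_{E_\Phi^\infty}d=0$; and since $\sigma_m\to 0$ one checks $P_{m,L_m}\to t$ uniformly, so these small-$d$ functions accumulate only at $t$.

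The main point to verify — and the step I expect to be the obstacle — is that no member of $E_\Phi^\infty$ has infinite total increase, i.e. that $\gamma(-\infty)<\infty$ for every recentred limit $\gamma$, so that $d>0$ throughout $E_\Phi^\infty$. Here the order of the two limits is decisive: $\gamma(-\infty)=\lim_{|\xi|\to\infty}\lim_k\bigl(G(s_k)-G(s_k+\xi)\bigr)$, and not $\lim_k G(s_k)=\infty$; this discrepancy is precisely the upper semicontinuity (the "jump") of $d$. I would show that, because the gaps grow to $\infty$, for each fixed $\xi$ the window $[s_k-|\xi|,s_k]$ eventually meets the ramp of at most one event; passing to a subsequence that stabilises this configuration forces every $\gamma$ to be a single-event profile $P_{m,u}$ (or $t$, when the relevant masses tend to $\infty$), so $\gamma(-\infty)\le m<\infty$. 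Two routine technical points remain: genuine recurrence of each $P_{m,u}$ (ensured by using every $m$ infinitely often), and the convexity of $\Phi$ together with $\Phi\in\Delta_2^\infty$, which I would secure by letting each ramp relax back to slope $0$ gradually, at exponential rate, so that $2^{G(s)}(1+G'(s))$ — hence $\Phi'$ — stays nondecreasing; this changes each event's mass by only $O(1)$ and leaves the profile analysis intact.
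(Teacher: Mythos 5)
Your proposal is correct, and the function you build is essentially the one in the paper's proof, with the dictionary $\sigma_m\leftrightarrow r_{i-1}/r_i$, $L_m\leftrightarrow r_i$, mass $m\leftrightarrow r_{i-1}$: the paper also uses ramps of vanishing slope and diverging mass on a logarithmic scale, each type recurring infinitely often, with gaps tending to infinity (its condition \eqref{eq2}), and both texts get $E_\Phi^\infty\not\equiv\{t\}$ by evaluating at the tops of full events. The differences are two. First, convexification: the paper tolerates a non-convex $F$ and passes to $\Phi(t)=\int_0^tF(s)/s\,ds$, proving $\Phi\asymp F$ from a $\Delta_2$-estimate, whereas you keep $\Phi$ convex by relaxing each ramp exponentially; this does work (the requirement is $G''\ge -(\ln 2)\,G'(1+G')$, and the relaxation tails of events lying below a given window sum geometrically since the gaps are eventually bounded below, so the single-event analysis survives), but it is the one place where your sketch leaves a genuinely routine verification to the reader. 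Second, and more substantially, the hard half $C_\Phi^\infty\cong\{t\}$: the paper proves it by hand, writing $H\in C_\Phi^\infty$ as a limit of finite convex combinations and running a dichotomy plus index-set argument (the sets $I(l,r)$, $I'(l,r)$) showing that at least half of the weight always sits on windows meeting a low-type event; you instead invoke the barycentre (Choquet) representation of points of $\overline{{\rm conv}}\,E_\Phi^\infty$ over the compact set $E_\Phi^\infty$ and use monotone convergence of $H(t)/t\downarrow d(H)$, reducing the whole convex-hull half to the pointwise statement $d>0$ on $E_\Phi^\infty$, which you then prove by the same gap mechanism the paper deploys inside its combination estimates. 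Your route is softer and isolates a clean general fact: for any Orlicz $F\in\Delta_2^\infty$ one has $E_F^\infty\cong\{t\}$ if and only if $C_F^\infty\cong\{t\}$. But note that it is tied to the exponent $1$: the upper bound $H(t)\le t$ is free, so two-sided equivalence collapses to positivity of the single upper-semicontinuous invariant $d$, and pointwise positivity of $d$ integrates to positivity at the barycentre; for $t^q$ with $q>1$ the analogous reduction breaks down (one would need $\int\sup_{0<t\le 1}H(t)t^{-q}\,d\nu(H)<\infty$, and pointwise finiteness does not integrate), which is exactly consistent with Corollary~\ref{restricted and not}, where $E_{\Psi_q}^\infty\cong\{t^q\}$ but $C_{\Psi_q}^\infty\not\cong\{t^q\}$. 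The paper's hands-on argument, in exchange, avoids Choquet theory altogether and its mechanism transfers to exponent $p$ simply via the substitution $\Phi_p(u)=\Phi(u^p)$ (Corollary~\ref{cor3new}).
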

\begin{proof}
Let $\{r_i\}_{i=0}^\infty$ be an increasing sequence of positive integers such that $r_0=1$ and $\lim_{i\to\infty}r_{i-1}/r_i=0$. Furthermore, for each $i=1,2,\dots$ let $\{k_i^j\}_{j=1}^\infty$ be an increasing sequence of positive integers such that the intervals $A_i^j:=[2^{k_i^j}-r_i,2^{k_i^j}]$, $i,j=1,2,\dots,$ are pairwise disjoint and are separated in the sense that size of the gaps between them tends to $\infty$. More precisely, if $\{A_k^*\}_{i=0}^\infty$ is the sequence of these sets enumerated so that $A_1^*<A_2^*<\dots$, then 
\begin{equation}
\label{eq2}
\min A_{k+1}^*-\max A_k^*\to\infty\;\;\mbox{as}\;k\to\infty.
\end{equation}

We define also the functions
$$
\varphi(s):=\sum_{i=1}^\infty (1+r_{i-1}/r_i)\sum_{j=1}^\infty\chi_{A_i^j}(s)+\chi_{[0,\infty)\setminus \cup_{i,j=1}^\infty A_i^j}(s),$$
$$
f(t):=\int_0^t\varphi(s)\,ds,$$
and
$$
F(x):=2^{f(\log x)}\chi_{(1,\infty)}(x)+x\chi_{[0,1]}(x).$$

The function $F$ need not to be convex. However, we will show that $F$ is equivalent on $[0,\infty)$ to the convex function $\Phi$ defined by
$$
\Phi(t):=\int_0^t F(s)/s\,ds.$$
To prove the convexity of $\Phi$ it suffices to check that $F(s)/s$ increases, which is equivalent to the inequality
\begin{equation}
\label{eq6}
\frac{F(st)}{F(s)}\ge t\;\;\mbox{for all}\;t\ge 1,s>0.
\end{equation}
If $s\ge 1$, then
$$
f(\log t+\log s)-f(\log s)=\int_{\log s}^{\log t+\log s}\varphi(u)\,du\ge \log t,$$
whence
$$
\frac{F(st)}{F(t)}\ge 2^{\log t}=t.$$
Suppose that $0<s<1$. Then, if $st<1$, by definition, we have $F(st)=st=tF(s)$. Otherwise, if $st\ge 1$, then
$$
f(\log (st))=\int_{0}^{\log (st)}\varphi(u)\,du\ge \log (st),$$
and therefore
$$
F(st)=2^{f(log (st))}\ge st=tF(s).$$
Thus, \eqref{eq6} is proved, which implies the convexity of $\Phi$. Moreover, from \eqref{eq6} it follows that $\Phi(t)\le F(t)$ for all $t>0$. 

Show now that
\begin{equation}
\label{eq7}
F(t)\le 4 F(t/2),\;\;t>0.
\end{equation}

Indeed, if $0<t\le 1$, we have $F(t)=t=2F(t/2)$. Since $\varphi(u)\le 2$ for $1<u\le 2$, it follows
$$
f(\log t)=\int_{0}^{\log t}\varphi(u)\,du\le 2\log t,$$
whence
$$
F(t)=2^{f(\log t)}\le t^2\le 2t=4F(t/2).$$
Finally, if $t>2$, then
$$
f(\log t)-f(\log (t/2))=\int_{\log (t/2)}^{\log t}\varphi(u)\,du\le 2.$$
Consequently,
$$
\frac{F(t)}{F(t/2)}=2^{f(\log t)-f(\log (t/2))}\le 4.$$
Summarizing these estimates, we obtain \eqref{eq7}. 

In turn, from inequalities \eqref{eq6} and \eqref{eq7} it follows that 
$$
\Phi(t)\ge \int_{t/2}^t \frac{F(u)}{u}\,du\ge F(t/2)\ge \frac14 F(t).$$
Thus, 
\begin{equation}
\label{eq7a}
\frac14 F(t)\le \Phi(t)\le F(t),\;\;t>0.
\end{equation}

Let us prove next that $E_\Phi^\infty\not\equiv\{t\}$. To this end, it suffices clearly to find functions $H_i\in E_\Phi^\infty$, $i=1,2,\dots$, such that  
\begin{equation}
\label{eq3}
2^{r_i}H_i(2^{-r_i})\to 0\;\;\mbox{as}\;i\to\infty.
\end{equation}

By the definition of $F$, for all $i,j=1,2,\dots$ 
$$
\frac{F(2^{2^{k_i^j}-r_i})}{F(2^{2^{k_i^j}})}=2^{f(2^{k_i^j}-r_i)-f(2^{k_i^j})}.$$
Therefore, since
$$
f(2^{k_i^j}-r_i)-f(2^{k_i^j})=\int_{2^{k_i^j}}^{2^{k_i^j}-r_i}\varphi(s)\,ds=-\int_{2^{k_i^j}-r_i}^{2^{k_i^j}}(1+r_{i-1}/r_i)\,ds=-r_i-r_{i-1},
$$
we have 
$$
\frac{F(2^{2^{k_i^j}-r_i})}{F(2^{2^{k_i^j}})}=2^{-r_i-r_{i-1}},\;\;i,j=1,2,\dots.$$
Then, by \eqref{eq7a}, passing to an appropriate subsequence $\{2^{2^{k_i^{j_s}}}\}_{s=1}^\infty$, for a function $H_i\in E_\Phi^\infty$ we get
$$
H_i(2^{-r_i})=\lim_{s\to\infty}\frac{\Phi(2^{2^{k_i^{j_s}}-r_i})}{\Phi(2^{2^{k_i^{j_s}}})}\asymp 2^{-r_i-r_{i-1}},\;\;i=1,2,\dots$$
with some constant independent of $i$.
Thus, \eqref{eq3} is proved and hence $E_\Phi^\infty\not\equiv\{t\}$.

It is left to show that $C_\Phi^\infty\cong\{t\}$. Since $H(t)\le t$ for every $H\in C_\Phi^\infty$, we need to prove only that $H(t)\ge ct$, $0<t\le 1$, for some constant $c>0$ depending on $H$. Observe that it suffices to check that there exists  $c>0$ such that  
\begin{equation}
\label{eq4}
H(2^{-r})\ge c 2^{-r}, r=0,1,\dots
\end{equation}
Indeed, assuming that \eqref{eq4} holds, for each $0<t\le 1$ we can find $r=0,1,\dots$ satisfying $2^{-r-1}<t\le 2^{-r}$, Then, since $H$ increases, it follows
$$
H(t)\ge H(2^{-r-1})\ge c 2^{-r-1}\ge c_1 t,$$
with $c_1:=c/2$. Moreover, one can readily see that it is sufficient to prove \eqref{eq4} only for $r\ge m$, with some $m\in\mathbb{N}$. 

By definition, every function $H\in C_\Phi^\infty$ can be represented as follows:
\begin{equation}
\label{eq5}
H(u)=\lim_{l\to\infty} y_l(u),\;\;\mbox{where}\;y_l(u):=\sum_{s=1}^{n_l}\lambda_s^l\frac{\Phi(t_s^lu)}{\Phi(t_s^l)},\;\;0<u\le 1, 
\end{equation}
where $\lambda_s^l>0$, $\sum_{s=1}^{n_l}\lambda_s^l=1$, and $\beta_l:=\min _{s=1,\dots,n_l}t_s^l\to \infty$ as $l\to\infty$.

Observe that if $y_l(2^{-r})2^{r}\ge \frac{1}{16}$ for every $r=1,2,\dots$ and all sufficiently large $l$, then we have inequality \eqref{eq4} for $c=\frac{1}{16}$, and so everything is done. Otherwise, passing to a subsequence if it is necessary, we can assume that there is $m\in\mathbb{N}$ such that for all sufficiently large $l$
\begin{equation}
\label{eq21}
y_l(2^{-m})2^{m}<\frac{1}{16}.
\end{equation}
For a fixed $m$ satisfying \eqref{eq21}, choose $i_0$ so that
\begin{equation}
\label{eq22}
\frac{r_{i-1}}{r_{i}}\le\frac{1}{m}\;\;\;\mbox{for all}\;i\ge i_0.
\end{equation}

Also, let 
$$\Delta(l,s,r):=[\log t_s^l-r,\log t_s^l],\;\; s=1,\dots,n_l,\; l,r=1,2,\dots$$
and
$$
I(l,r):=\{s=1,\dots,n_l:\,\Delta(l,s,r)\cap A_i^j\ne\emptyset\;\mbox{for some}\;1\le i\le i_0\;\mbox{and}\; j=1,2,\dots\},$$
$$
I'(l,r):=\{1,2,\dots,n_l\}\setminus I(l,r),\;\;l,r=1,2,\dots
$$
Let us estimate from above the sum $\sum_{s\in I'(l,m)}\lambda_s^l$ for sufficiently large $l=1,2,\dots$

Suppose $s\in I'(l,m)$. In the case when $\Delta(l,s,m)\cap A_i^j=\emptyset$ for all $i,j=1,2,\dots$, we have
$$
f(\log t_s^l-m)- f(\log t_s^l)=-\int_{\Delta(l,s,m)}\,du =-m,$$
whence 
$$
\frac{F(t_s^l2^{-m})}{F(t_s^l)}=2^{f(\log t_s^l-m)- f(\log t_s^l)}=2^{-m}.$$
Let now $s\in I'(l,m)$ and $\Delta(l,s,m)\cap A_i^j\ne\emptyset$ for some positive integers $i$ and $j$. Then, by definition of the set $I'(l,m)$, we deduce that $i>i_0$.
Therefore, $\varphi(u)\le 1+\max_{i\ge i_0}r_{i-1}/r_{i}$, $u\in \Delta(l,s,m)$, and from \eqref{eq22} it follows
\begin{eqnarray*}
f(\log t_s^l-m)-f(\log t_s^l)&=& -\int_{\Delta(l,s,m)}\varphi(u)\,du\ge -\int_{\Delta(l,s,m)}(1+\max_{i\ge i_0}r_{i-1}/r_{i})\,du\\
&=& -m(1+\max_{i\ge i_0}r_{i-1}/r_{i})\ge -m-1.
\end{eqnarray*}
Thus, in this case
$$
\frac{F(t_s^l2^{-m})}{F(t_s^l)}\ge 2^{-m-1}.$$
Summarizing the above estimates and appealing to inequalities \eqref{eq7a} and  \eqref{eq21}, we get  
$$
\sum_{s\in I'(l,m)}\lambda_s^l\le 2^{m+1}\sum_{s\in I'(l,m)}\lambda_s^l\frac{F(t_s^l2^{-m})}{F(t_s^l)}\le 2^{m+3}\sum_{s\in I'(l,m)}\lambda_s^l\frac{\Phi(t_s^l2^{-m})}{\Phi(t_s^l)}\le 2^{m+3} y_l(2^{-m})\le\frac12.$$
Therefore, by definition of the set $I(l,m)$, for all sufficiently large $l$ 
\begin{equation*}
\label{eq23}
\sum_{s\in I(l,m)}\lambda_s^l\ge\frac12.
\end{equation*}
Assuming that $r\ge m$, we clearly have $\Delta(l,s,m)\subset \Delta(l,s,r)$ for all  $s=1,2,\dots,n_l$ and $l=1,2,\dots$. This implies, in turn, that $I(l,m)\subset I(l,r)$, $l=1,2,\dots$.
Thus, from the last inequality it follows that for every $r>m$ and for all sufficiently large $l$
\begin{equation}
\label{eq26}
\sum_{s\in I(l,r)}\lambda_s^l\ge\frac12.
\end{equation}
Moreover, choosing $l$ sufficiently large, thanks to \eqref{eq2}, we can assume that each interval $\Delta(l,s,r)$ intersects, at most, one of the intervals $A_i^j$, $i,j=1,2,\dots$. In particular, if $s\in I(l,r)$ the interval $\Delta(l,s,r)$ intersects only a unique interval $A_i^j$ for some $1\le i\le i_0$ and $j=1,2,\dots$. Then, denoting $\alpha:=m(\Delta(l,s,r)\cap A_i^{j})$, we have $0<\alpha\le r_i$ and
\begin{eqnarray*}
f(\log t_s^l-r)-f(\log t_s^l)&=& -\int_{\Delta(l,s,r)}\varphi(u)\,du= -r+\alpha -\int_{\Delta(l,s,r)\cap A_i^{j}}(1+r_{i-1}/r_i)\,du\\ &=& -r+\alpha-\alpha(1+r_{i-1}/r_i)\ge -r-r_{i-1}\ge -r-r_{i_0-1},
\end{eqnarray*}
whence
$$
\frac{F(t_s^l2^{-r})}{F(t_s^l)}\ge 2^{-r_{i_0-1}}\cdot 2^{-r},\;\;s\in I(l,r).$$
Combining this together with inequalities \eqref{eq7a} and \eqref{eq26}, we deduce that for all sufficiently large $l$ 
$$
y_l(2^{-r})\ge \sum_{s\in I(l,r)}\lambda_s^l\frac{\Phi(t_s^l2^{-r})}{\Phi(t_s^l)}\ge \frac14\sum_{s\in I(l,r)}\lambda_s^l\frac{F(t_s^l2^{-r})}{F(t_s^l)}\ge 2^{-r_{i_0-1}-2}\cdot 2^{-r}\sum_{s\in I(l,r)}\lambda_s^l\ge 2^{-r_{i_0-1}-3}\cdot 2^{-r}.$$
Observe that the index $i_0$ in \eqref{eq22} does not depend on $r$ (it depends only on the given function $H$). Therefore, the last inequality implies estimate \eqref{eq4} for $c=2^{-r_{i_0-1}-3}$. This completes the proof of the proposition.
\end{proof}

\begin{corollary}
\label{cor3new}
For every $1<p<\infty$ there exists an Orlicz function $\Phi_p$ on $[0,\infty)$ such that $C_{\Phi_p}^\infty\cong\{t^p\}$ and $E_{\Phi_p}^\infty\not\equiv\{t^p\}$.
\end{corollary}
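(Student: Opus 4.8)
The plan is to build $\Phi_p$ directly from the function $\Phi$ of Proposition~\ref{prop3} by composing with the power $t\mapsto t^p$. Concretely, I set $\Phi_p(t):=\Phi(t^p)$. Since $\Phi$ is increasing and convex and, for $p>1$, the map $t\mapsto t^p$ is increasing and convex, the composition $\Phi_p$ is again an increasing convex function with $\Phi_p(0)=0$ and $\Phi_p(t)\to\infty$. Moreover, it was shown in the proof of Proposition~\ref{prop3} that $\tfrac14 F\le\Phi\le F$ and $F(2s)\le 4F(s)$, so $\Phi(2s)\le 16\,\Phi(s)$ and hence $\Phi\in\Delta_2^\infty$; iterating this estimate $\lceil p\rceil$ times gives $\Phi(2^p s)\le C\Phi(s)$, whence $\Phi_p\in\Delta_2^\infty$ as well. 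Thus $\Phi_p$ is a legitimate Orlicz function for which $E_{\Phi_p}^\infty$ and $C_{\Phi_p}^\infty$ are defined.

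The key observation is that the change of variable $z=y^p$ intertwines the dilation families of $\Phi$ and $\Phi_p$, since
\[
\frac{\Phi_p(xy)}{\Phi_p(y)}=\frac{\Phi(x^p y^p)}{\Phi(y^p)}=\frac{\Phi(x^p z)}{\Phi(z)},\qquad z=y^p .
\]
I formalize this by introducing the operator $T\colon C[0,1]\to C[0,1]$, $(Tg)(x):=g(x^p)$. Because $x\mapsto x^p$ is a homeomorphism of $[0,1]$ onto itself, $T$ is a surjective linear isometry of $C[0,1]$, so it commutes with closures, convex hulls and intersections. As $y$ runs over $(A,\infty)$ the variable $z=y^p$ runs over $(A^p,\infty)$, so the displayed identity gives $E_{\Phi_p,A}^\infty=T\big(E_{\Phi,A^p}^\infty\big)$; intersecting over $A>0$ yields $E_{\Phi_p}^\infty=T\big(E_\Phi^\infty\big)$ and then $C_{\Phi_p}^\infty=T\big(C_\Phi^\infty\big)$. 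In words, $E_{\Phi_p}^\infty=\{H(t^p):H\in E_\Phi^\infty\}$ and $C_{\Phi_p}^\infty=\{K(t^p):K\in C_\Phi^\infty\}$.

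With this correspondence the two assertions are immediate translations of Proposition~\ref{prop3}. For $C_{\Phi_p}^\infty\cong\{t^p\}$: given $K\in C_{\Phi_p}^\infty$, write $K=T\tilde K$ with $\tilde K\in C_\Phi^\infty$; since $C_\Phi^\infty\cong\{t\}$ there is $C=C(\tilde K)$ with $C^{-1}u\le\tilde K(u)\le Cu$ on $(0,1]$, and substituting $u=t^p$ gives $C^{-1}t^p\le K(t)\le Ct^p$. For $E_{\Phi_p}^\infty\not\equiv\{t^p\}$ I argue by contradiction: a uniform constant $C$ with $C^{-1}t^p\le H(t^p)\le Ct^p$ valid for all $H\in E_\Phi^\infty$ and all $t\in(0,1]$ becomes, after the bijective substitution $u=t^p\in(0,1]$, a uniform constant witnessing $E_\Phi^\infty\equiv\{t\}$, contradicting Proposition~\ref{prop3}.

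I do not expect a genuine obstacle, since all the content of the corollary is already carried by $\Phi$; the only points requiring care are the bookkeeping facts that $\Phi_p$ remains an Orlicz function in $\Delta_2^\infty$ and that $T$ is a surjective isometry of $C[0,1]$, so that it exchanges $E^\infty$, $C^\infty$ and the relations $\cong$, $\equiv$ faithfully. The mildest attention is needed in verifying that the intersection over $A$ and the convex-hull closure commute with $T$, which is automatic once $T$ is recognized as a homeomorphism of $C[0,1]$.
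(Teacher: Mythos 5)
Your proposal is correct and follows essentially the same route as the paper: the paper also sets $\Phi_p(u):=\Phi(u^p)$ and deduces both assertions from Proposition~\ref{prop3} via the identity $\Phi_p(xy)/\Phi_p(y)=\Phi(x^py^p)/\Phi(y^p)$, though it verifies the correspondence $H\in E_{\Phi_p}^\infty\Leftrightarrow H(u^{1/p})\in E_\Phi^\infty$ by a sequential limit argument rather than through your isometry $T$ of $C[0,1]$. Your packaging via $T$ is a slightly cleaner way to transfer the sets $E^\infty$ and $C^\infty$ (the paper only sketches the $C^\infty$ case as ``similar''), but the substance is identical.
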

\begin{proof}
Let $\Phi$ be the Orlicz function from Proposition~\ref{prop3}. We set $\Phi_p(u):=\Phi(u^p)$, $0\le u\le 1$. Clearly, $\Phi_p$ is an Orlicz function. Moreover, one can readily check that
$$
H\in E_{\Phi_p}^\infty\;\;\mbox{if and only if}\;\;H(u^{1/p})\in E_{\Phi}^\infty$$
and
$$
H\in C_{\Phi_p}^\infty\;\;\mbox{if and only if}\;\;H(u^{1/p})\in C_{\Phi}^\infty.$$
Indeed, assuming that $H\in E_{\Phi_p}^\infty$, we can find a sequence $\{t_n\}$ such that $t_n\uparrow\infty$ as $n\to\infty$ and
$$
H(u)=\lim_{n\to\infty}\frac{\Phi_p(ut_n)}{\Phi_p(t_n)}=\lim_{n\to\infty}\frac{\Phi(u^pt_n^p)}{\Phi(t_n^p)}.$$
Since $t_n^p\uparrow\infty$, the function $H(u^{1/p})$ belongs to the set $E_{\Phi}^\infty$. The converse can be shown in the same way.
Since the result for the set $C_{\Phi_p}^\infty$ can be obtained similarly, the desired result follows now easily from Proposition~\ref{prop3}. 
\end{proof}

\section{\protect \medskip $(p-DH)$ Orlicz spaces, $1\le p<\infty$.}

We start this section with proving necessary and sufficient conditions, under which an Orlicz space is restricted $(p-DH)$. We provide the proof of this simple result for the reader's convenience and as well as to track the constants in the inequalities for future purposes.


Recall that $\bar{\chi}_{A}:={\chi}_{A}/\|{\chi}_{A}\|_{L_F}.$

\begin{propos}
\label{prop1}
Let $1\le p<\infty$, and let $F$ be an Orlicz function. 
The following conditions are equivalent:

(a) $L_F$ is restricted $(p-DH)$;

(b) for every sequence of disjoint sets $E_n\subset [0,1]$, $n=1,2,\dots$, there exists a subsequence $E_{n_k}$, $k=1,2,\dots$, such that 
\begin{equation}
\label{eq1new}
C^{-1}m^{1/p}\le\Big\|\sum_{k=1}^m \bar{\chi}_{E_{n_k}}\Big\|_{L_F}\le Cm^{1/p},\;\;m=1,2,\dots
\end{equation}
with some constant $C>0$;

(c) $E_F^\infty\cong\{t^p\}$.
\end{propos}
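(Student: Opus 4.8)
The plan is to reduce all three conditions to the asymptotic behaviour of the dilation functions $G_y(x):=F(xy)/F(y)$, whose uniform limits as $y\to\infty$ constitute $E_F^\infty$, and to run the cycle $(a)\Rightarrow(b)\Rightarrow(c)\Rightarrow(a)$. First I would record the basic computation. Since $\phi_{L_F}(u)=1/F^{-1}(1/u)$, for any measurable $E$ one has $\|\chi_E\|_{L_F}=1/F^{-1}(1/m(E))$, so that $\bar{\chi}_E=y_E\,\chi_E$ with $y_E:=F^{-1}(1/m(E))$ and $m(E)=1/F(y_E)$. Given disjoint sets $E_{n_k}$, put $y_k:=y_{E_{n_k}}$; disjointness in $[0,1]$ forces $m(E_{n_k})\to0$, hence $y_k\to\infty$. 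By the definition of the Luxemburg norm and disjointness, for finitely supported $(c_k)$ the value $\lambda=\big\|\sum_k c_k\bar{\chi}_{E_{n_k}}\big\|_{L_F}$ is exactly the solution of the modular equation
$$
\sum_k \frac{F(|c_k|y_k/\lambda)}{F(y_k)}=\sum_k G_{y_k}(|c_k|/\lambda)=1 .
$$
Two facts will be used repeatedly: since $G_y(1)=1$ and $G_y$ increases, at the optimal $\lambda$ one has $|c_k|/\lambda\le1$ for all $k$, so only the restrictions of $G_{y_k}$ to $[0,1]$ matter; and, by the compactness of $E_{F,A}^\infty$ in $C[0,1]$, after passing to a subsequence we may assume $G_{y_k}\to H$ in $C[0,1]$ for some $H\in E_F^\infty$, and moreover that $\sum_k\|G_{y_k}-H\|_{C[0,1]}\le\delta$ for any prescribed $\delta>0$.

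The implication $(a)\Rightarrow(b)$ is immediate: specializing the $\ell_p$-equivalence of $\{\bar{\chi}_{E_{n_k}}\}$ to $c_1=\dots=c_m=1$, $c_k=0\ (k>m)$, yields \eqref{eq1new}. For $(c)\Rightarrow(a)$ I would start from an arbitrary disjoint sequence, pass to a subsequence with $G_{y_k}\to H\in E_F^\infty$ and $\sum_k\|G_{y_k}-H\|_{C[0,1]}\le\delta\le\frac12$. By $(c)$ there is $c>0$ with $c^{-1}t^p\le H(t)\le c\,t^p$ on $(0,1]$. For arbitrary $(c_k)$, at the optimal $\lambda$ we have $x_k:=|c_k|/\lambda\in[0,1]$ and $\sum_k G_{y_k}(x_k)=1$; the summable-error bound gives $\sum_k H(x_k)\in[1-\delta,1+\delta]$, and then $H\asymp t^p$ yields $\sum_k x_k^p\asymp1$, i.e. $\lambda\asymp(\sum_k|c_k|^p)^{1/p}$. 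Hence $\{\bar{\chi}_{E_{n_k}}\}$ is equivalent to the unit vector basis of $\ell_p$, which is $(a)$. The reason for demanding summable (rather than merely uniform) errors is that near $0$ there is no multiplicative control of $G_{y_k}$ by $H$; carrying the error inside the normalized modular $\sum_kG_{y_k}(x_k)=1$ makes the additive contribution $\delta$ harmless.

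Finally I would prove $(b)\Rightarrow(c)$ by contraposition. If $(c)$ fails, fix $H_0\in E_F^\infty$ with $H_0\not\cong\{t^p\}$ and $y_n\uparrow\infty$ with $G_{y_n}\to H_0$ in $C[0,1]$, and construct along a sufficiently sparse subsequence disjoint sets $E_n\subset[0,1]$ with $m(E_n)=1/F(y_n)$ and $\sum_n\|G_{y_n}-H_0\|_{C[0,1]}\le\frac12$. The decisive feature of this construction is that $G_{y_n}\to H_0$ for the \emph{whole} sequence, so every subsequence $\{E_{n_k}\}$ reproduces the single limit $H_0$. Applying the modular equation with all coefficients equal to $1$ together with the summable-error estimate gives $\big\|\sum_{k=1}^m\bar{\chi}_{E_{n_k}}\big\|_{L_F}\asymp 1/H_0^{-1}(1/m)$ for every subsequence; since $H_0\not\cong\{t^p\}$ forces $1/H_0^{-1}(1/m)\not\asymp m^{1/p}$ (via the elementary equivalence $H_0(t)\asymp t^p\Leftrightarrow H_0^{-1}(s)\asymp s^{1/p}$ read along $s=1/m$, with the degenerate case $H_0\equiv0$ near $0$ giving a norm bounded in $m$), no subsequence can satisfy \eqref{eq1new}, so $(b)$ fails. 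I expect the genuine obstacle throughout to be exactly this control of the Luxemburg norm near the origin — uniform convergence $G_{y_k}\to H$ does not survive evaluation of the dilations at $x\to0$ — which is precisely why summability of the errors, the normalization of the modular to $1$, and (in the construction) the uniqueness of the limit $H_0$ are woven into every step.
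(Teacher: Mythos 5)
Your proposal is correct and follows essentially the same route as the paper's proof: both arguments rest on realizing a function $H\in E_F^\infty$ by disjoint sets of measure $1/F(t_n)$ with \emph{summable} uniform errors, and on the compactness of $E_F^\infty$ in $C[0,1]$ to extract a limit function in $(c)\Rightarrow(a)$. The only difference is organizational: you prove $(b)\Rightarrow(c)$ by contraposition through the modular identity $\bigl\|\sum_{k=1}^m \bar{\chi}_{E_{n_k}}\bigr\|_{L_F}\asymp 1/H_0^{-1}(1/m)$ (valid for every subsequence, with the degenerate case treated separately), whereas the paper argues directly, invoking the symmetry of the unit vector basis of $\ell_H$ to handle the passage to a subsequence --- the same computation in different clothing.
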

\begin{proof}
Let us show first that for every $H\in E_F^\infty$ and any $\varepsilon>0$ there is a sequence of disjoint sets $E_n\subset [0,1]$, $n=1,2,\dots$, such that for all $(c_n)\in \ell_H$
\begin{equation}
\label{equa2}
(1-\varepsilon)\|(c_n)\|_{\ell_H}\le \Big\|\sum_{n=1}^\infty c_n\bar{\chi}_{E_n}\Big\|_{L_F}\le (1+\varepsilon)\|(c_n)\|_{\ell_H}.
\end{equation}

Since $H\in E_F^\infty$, then there is a sequence $\{t_n\}_{n=1}^\infty$ such that $t_n\to\infty$ and
$$
\left|\frac{F(t_nu)}{F(t_n)}-H(u)\right|<\frac{\varepsilon}{2^n}$$
for all $0\le u\le 1$ and $n=1,2,\dots$
This implies that for all $c_n\in\mathbb{R}$
\begin{equation}
\label{equa3}
\sum_{n=1}^\infty H(c_n)-\varepsilon\le \sum_{n=1}^\infty \frac{F(t_nc_n)}{F(t_n)}\le \sum_{n=1}^\infty H(c_n)+\varepsilon.
\end{equation}
Without loss of generality, we may assume that $F(t_n)\ge 2^n$, $n=1,2,\dots$. Thanks to that, there are disjoint sets $E_n\subset [0,1]$ such that $m(E_n)=1/F(t_n)$, $n=1,2,\dots$. Denoting $f_n:=\bar{\chi}_{E_n}$, suppose that $\|\sum_{n=1}^\infty c_nf_n\|_{L_F}\le 1$. Since $\|\chi_{E_n}\|_{L_F}=1/F^{-1}(1/m(E_n))=1/t_n$ (see Section~\ref{Orlicz}), it follows
$$
\sum_{n=1}^\infty \frac{F(t_n|c_n|)}{F(t_n)}=\int_0^1F\Big(\Big|\sum_{n=1}^\infty c_nf_n(u)\Big|\Big)\,du\le 1.$$
By \eqref{equa3}, this yields
$$
\sum_{n=1}^\infty H(|c_n|)\le 1+\varepsilon,$$
and, as $H$ is a convex function, $\|(c_n)\|_{\ell_H}\le 1+\varepsilon$. 

Conversely, if $\|(c_n)\|_{\ell_H}\le 1$, we have $\sum_{n=1}^\infty H(|c_n|)\le 1$. Therefore, according to \eqref{equa3},  
$$
\int_0^1F\Big(\Big|\sum_{n=1}^\infty c_nf_n(u)\Big|\Big)\,du\le 1+\varepsilon.$$
Hence, $\|\sum_{n=1}^\infty c_nf_n\|_{L_F}\le 1+\varepsilon.$
In view of the obtained estimates, applying the simple homogeneity argument, we come to inequality \eqref{equa2}.

Proceeding with the proof of the proposition, note that the implication $(a)\Rightarrow (b)$ is obvious. Assuming now that $(b)$ holds, we prove $(c)$.

Let $H\in E_F^\infty$ be arbitrary. Then, setting $\varepsilon=\frac12$, we can find  disjoint sets $E_n\subset [0,1]$, $n=1,2,\dots$, such that for all $(c_n)\in \ell_H$ we have \eqref{equa2}. In particular, for all $m=1,2,\dots$ it follows that
$$
\frac12\Big\|\sum_{n=1}^m e_n\Big\|_{\ell_H}\le \Big\|\sum_{n=1}^m \bar{\chi}_{E_n}\Big\|_{L_F}\le 2\Big\|\sum_{n=1}^m e_n\Big\|_{\ell_H},$$
where $e_n$ are the vectors of the unit basis of $\ell_H$. On the other hand, according the hypothesis, passing to a subsequence (and preserving the notation), we get  
\begin{equation*}
\label{eq1new}
C^{-1}m^{1/p}\le\Big\|\sum_{n=1}^m \bar{\chi}_{E_{n}}\Big\|_{L_F}\le Cm^{1/p},\;\;m=1,2,\dots
\end{equation*}
with some constant $C>0$. In consequence,
$$
(2C)^{-1}m^{-1/p}\le H^{-1}(1/m)\le(2C)^{-1}m^{-1/p},\;\;m=1,2,\dots,$$ 
and so $H(u)\asymp u^{1/p}$, $0<u\le 1$, with a constant that depends only on $C$. Thus, $E_F^\infty\cong\{t^p\}$, which completes the proof of $(c)$.

Finally, we will prove the implication $(c)\Rightarrow (a)$. Let $E_n\subset [0,1]$, $n=1,2,\dots$, be disjoint, $f_n:=\bar{\chi}_{E_n}$. We introduce the functions
$$
H_n(u):= \int_0^1F(uf_n(v))\,dv=F(uF^{-1}(1/m(E_n)))m(E_n).$$
Clearly,
$$
H_n(u)= \frac{F(ut_n)}{F(t_n)},\;\;\mbox{where}\;t_n:=F^{-1}(1/m(E_n)),\;n=1,2,\dots$$
Since $m(E_n)\to 0$, then $t_n\to\infty$ as $n\to\infty$. Therefore, since $\{H_n\}$ is a relatively compact set in $C[0,1]$ \cite[Lemma~4.a.6 and  subsequent Remark]{LT1}, there is a subsequence $\{H_{n_j}\}$, uniformly converging on $[0,1]$ to some $H\in E_F^\infty$. We may assume that $|H_{n_j}(u)-H(u)|\le 2^{-j}$ for all $0\le u\le 1$ and $j=1,2,\dots$, whence
$$
\sum_{j=1}^\infty H(|c_j|)-1\le \sum_{j=1}^\infty H_{n_j}(|c_j|)\le \sum_{j=1}^\infty H(|c_j|)+1$$
for all $c_j\in\mathbb{R}$. Since $E_F^\infty\cong\{t^p\}$, then $C^{-1}u^p\le H(u)\le Cu^p$ for some $C\ge 1$ and all $0\le u\le 1$. Consequently,
$$
C^{-1}\sum_{j=1}^\infty |c_j|^p-1\le \sum_{j=1}^\infty H_{n_j}(|c_j|)\le C\sum_{j=1}^\infty |c_j|^p+1,$$
or equivalently
$$
C^{-1}\sum_{j=1}^\infty |c_j|^p-1\le \int_0^1F\Big(\Big|\sum_{j=1}^\infty c_jf_{n_j}(v)\Big|\Big)\,dv\le C\sum_{j=1}^\infty |c_j|^p+1.$$
Hence, if $\|(c_j)\|_{\ell_p}\le 1$, then $\|\sum_{j=1}^\infty c_jf_{n_j}\|_{L_F}\le C+1$. Conversely, if  $\|\sum_{j=1}^\infty c_jf_{n_j}\|_{L_F}\le 1$, then $\|(c_j)\|_{\ell_p}\le (2C)^{1/p}$. Thus, the subsequence $\{f_{n_j}\}$ is $2C$-equivalent in $L_F$ to the unit vector basis of $\ell_p$, and the proof is completed.

\end{proof}

To prove a similar criterion for $DH$-property, we will need the next useful result, which is well known in the separable case (see \cite[Proposition~3]{LTIII}).


\begin{lemma}
\label{le2}
Let $F$ be an Orlicz function. Then, every normalized disjoint sequence from the Orlicz space $L_F$ contains a subsequence that is $6$-equivalent to the unit vector basis of an Orlicz sequence space $\ell_H$ with some $H\in C_F^\infty$.
\end{lemma}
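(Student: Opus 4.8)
The plan is to reduce the statement to an analysis of the ``modular profiles'' of the functions in the sequence. First I would replace each $f_n$ by $|f_n|$: since the supports are pairwise disjoint, $|\sum_n c_nf_n|=\sum_n|c_n|\,|f_n|$ pointwise, so passing to $|f_n|$ and to nonnegative coefficients changes neither of the norms $\|\sum_n c_nf_n\|_{L_F}$ nor the question of equivalence to a symmetric unit vector basis. Because $\|f_n\|_{L_F}=1$, the Luxemburg norm gives $\int_0^1F(f_n)\,dt\le 1$, while $\int_0^1F(f_n/\lambda)\,dt>1$ for every $\lambda<1$. I then introduce the Orlicz functions
$$
H_n(u):=\int_0^1F(u\,f_n(t))\,dt,\qquad 0\le u\le 1,
$$
which are convex, increasing, vanish at $0$, satisfy $H_n(u)\le u$ (by convexity and $F(0)=0$), and obey $H_n(v)>1$ for every $v>1$, so that $H_n(c)\le 1$ forces $c\le 1$. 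The decisive point is the identity, valid for disjoint $f_n$ and $c_k\ge 0$,
$$
\int_0^1F\Big(\sum_k c_kf_{n_k}\Big)\,dt=\sum_k H_{n_k}(c_k),
$$
whence $\|\sum_k c_kf_{n_k}\|_{L_F}=\inf\{\lambda>0:\sum_k H_{n_k}(c_k/\lambda)\le 1\}$; if all $H_{n_k}$ coincided with one function $H$, this would be exactly $\|(c_k)\|_{\ell_H}$.

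Next I would show that every limit point of $\{H_n\}$ lies in $C_F^\infty$. Writing $\mu_n$ for the distribution of $f_n$ on its support and $\nu_n:=F\,d\mu_n$, one has $H_n(u)=\int_0^\infty \frac{F(us)}{F(s)}\,d\nu_n(s)$, where $\nu_n$ is a subprobability measure of total mass $\int_0^1F(f_n)\,dt\le 1$. Since the supports are disjoint in $[0,1]$, $m(\supp f_n)\to 0$, so for each fixed $A$ we have $\nu_n([0,A])\le F(A)\,m(\supp f_n)\to 0$: the mass of $\nu_n$ escapes to infinity. Consequently $H_n$ is, up to a sup-norm error tending to $0$, a $\nu_n$-average of the functions $F(\cdot\,s)/F(s)$ with $s>A$, i.e. of elements of $E_{F,A}^\infty$, and hence lies within a vanishing distance of $\overline{\operatorname{conv}}\,E_{F,A}^\infty$. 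Letting $A\to\infty$ and using that the nested compact sets $E_{F,A}^\infty$ decrease to $E_F^\infty$ (so that their closed convex hulls shrink into every neighborhood of $C_F^\infty$), I conclude that $\operatorname{dist}_{C[0,1]}(H_n,C_F^\infty)\to 0$. Since $C_F^\infty$ is compact \cite[Lemma~4.a.6]{LT1}, this both forces relative compactness of $\{H_n\}$ and places every limit point in $C_F^\infty$; passing to a subsequence I get $H_{n_k}\to H\in C_F^\infty$ uniformly on $[0,1]$, and after one further refinement I may assume $\sum_k\|H_{n_k}-H\|_{C[0,1]}$ is as small as desired.

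It remains to compare the two modulars term by term. In the direction $\|\sum_k c_kf_{n_k}\|_{L_F}\le 1$ one has $\sum_k H_{n_k}(c_k)\le 1$, so each $c_k\le 1$ and therefore $\sum_k H(c_k)\le \sum_k H_{n_k}(c_k)+\sum_k\|H_{n_k}-H\|\le 1+\varepsilon$; conversely, when $\sum_kH(c_k)\le 1$ the same estimate run backwards controls $\sum_k H_{n_k}(c_k)$. Feeding these bounds through the convexity inequalities $H(c/\lambda)\le\lambda^{-1}H(c)$ and $H_{n_k}(c/\lambda)\le\lambda^{-1}H_{n_k}(c)$ yields a two-sided comparison of $\|\sum_k c_kf_{n_k}\|_{L_F}$ with $\|(c_k)\|_{\ell_H}$ by a fixed constant, which, after fixing the admissible size of $\sum_k\|H_{n_k}-H\|$, can be taken to be $6$. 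The main obstacle, and the reason this is not merely the separable statement of \cite[Proposition~3]{LTIII}, is the passage to the limit when $F\notin\Delta_2^\infty$: there the modular of a norm-one function may be strictly below $1$ and the family $\{H_n\}$ fails to be equicontinuous at $u=1$, so a uniformly convergent subsequence is not available a priori and a single coefficient near the top of the range must be handled separately. The escape-to-infinity argument is precisely what repairs this, by pinning the $H_n$ to the compact profile set; it is also this endpoint slack that degrades the otherwise sharper equivalence constant to $6$.
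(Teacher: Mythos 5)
Your reduction to the modular profiles $H_n(u)=\int_0^1 F(u f_n(t))\,dt$ and the identity $\int_0^1 F\bigl(\sum_k c_k f_{n_k}\bigr)\,dt=\sum_k H_{n_k}(c_k)$ are correct, and in the separable case ($F\in\Delta_2^\infty$) your argument is essentially the proof of \cite[Proposition~3]{LTIII}. The genuine gap is the claim that $\operatorname{dist}_{C[0,1]}(H_n,C_F^\infty)\to 0$. Your escape-to-infinity argument controls \emph{where} the mass of $\nu_n$ sits, but not \emph{how much} mass there is: the total mass is $\nu_n([0,\infty))=\int_0^1 F(|f_n|)\,dt$, and when $F\notin\Delta_2^\infty$ this modular need not equal $1$ for a normalized function --- it can even tend to $0$. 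Concretely, take $F(t)=e^t-1$ and $f_n=\sum_{j\ge1} j^2\chi_{A_j^n}$, where the sets $A_j^n$ are pairwise disjoint with $m(A_j^n)=2^{-n-j}/F(j^2)$ (their total measure is less than $1$, so such sets exist in $[0,1]$). Then $\int_0^1 F(f_n)\,dt=2^{-n}$, while $\int_0^1F(f_n/\lambda)\,dt=\sum_j 2^{-n-j}F(j^2/\lambda)/F(j^2)=\infty$ for every $\lambda<1$ because $F(j^2/\lambda)/F(j^2)$ grows super-exponentially in $j$; hence $\|f_n\|_{L_F}=1$. For this normalized disjoint sequence, convexity gives $H_n(u)\le u\int_0^1F(f_n)\,dt\le 2^{-n}\to0$ uniformly on $[0,1]$, so the only limit point of $\{H_n\}$ (along any subsequence) is the zero function, which does not belong to $C_F^\infty$: every member $G$ of $E_{F,A}^\infty$, hence of $C_F^\infty$, satisfies $G(1)=1$. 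Your term-by-term comparison then collapses, since $\ell_H$ is meaningless for $H\equiv0$. More generally, limit points of your $H_n$ lie in $\{cG:\,G\in C_F^\infty,\ 0\le c\le1\}$, and when $c<1$ your comparison only yields equivalence to $\ell_{cG}$, which agrees with $\ell_G$ up to a constant of order $1/c$ --- neither uniform over sequences nor equal to $6$. So the escape-to-infinity argument does not repair the non-$\Delta_2$ obstacle you correctly identified; it is orthogonal to it.

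This is precisely the point where the paper takes a different, and necessary, route: each $f_n$ is split as $f_n=x_n+y_n$ with $y_n=f_n\chi_{\{|f_n|\le M_n\}}$ and $x_n=f_n\chi_{\{|f_n|>M_n\}}$, where $M_n$ is chosen, by absolute continuity of the integral, so that $\int_0^1F(|x_n|/2)\,dt\le2^{-n}$ and $\int_0^1F(2|y_n|)\,dt>1$. The bounded parts $y_n$ are disjoint, lie in the separable part $L_F^0$, and have norms in $(\tfrac12,1]$; for bounded functions the map $\lambda\mapsto\int_0^1F(|y_n|/\lambda)\,dt$ is finite-valued and continuous, so no mass deficit can occur, and the argument of \cite[Proposition~3]{LTIII} applies to them, producing a subsequence $4$-equivalent to the unit vector basis of $\ell_H$ with $H\in C_F^\infty$. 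The tall parts, being disjoint with modulars summing to at most $1$, satisfy $\bigl\|\sum_k a_k x_{n_k}\bigr\|_{L_F}\le 2\|(a_k)\|_{c_0}$. Disjointness of the two parts gives the lower estimate for free, the triangle inequality gives the upper one, and the constant $6=4+2$ emerges. If you wish to salvage your approach, you must build this truncation into it --- i.e., run your profile argument on the bounded parts only, where $\nu_n$ genuinely is a probability measure --- at which point you will have reproduced the paper's proof.
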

\begin{proof}
Let $\{f_n\}_{n=1}^\infty\subset L_F$ be a disjoint normalized sequence. 
We claim that the lemma will be proved once we select a subsequence $\{ f_{n_{k}} \}\subset\{ f_{n} \}$ such that $f_{n_{k}} = u_k+v_k,$ where the functions $u_k$ and $v_k$, $k=1,2,\dots$, satisfy the conditions:

(a) $u_k\cdot v_i=0$ for all $k,i=1,2,\dots$;

(b) the sequence $\{ u_k\}_{k=1}^{\infty}$ $4$-equivalent to the unit vector basis of an Orlicz sequence space $l_{H}$ with some $H \in C_{F}^{\infty}$, i.e., for all $(a_k)_{k=1}^\infty\in l_H$
\begin{equation}\label{14012020}
\frac14\| (a_{k}) \|_{l_{H}} \leq \Big\| \sum_{k=1}^{\infty}a_{k}u_{k}\Big\|_{L_F} \leq 4\| (a_{k}) \|_{l_{H}}.
\end{equation}

(c) for each sequence $(a_k)_{k=1}^\infty\in c_0$ we have
\begin{equation}\label{120120206}
\Big\| \sum_{k=1}^{\infty}a_{k}v_{k}\Big\|_{L_F} \leq 2\| (a_{k})\|_{c_{0}}.
\end{equation}

Indeed, on the one hand, from $\eqref{14012020}$ and $\eqref{120120206}$ it follows that
\begin{eqnarray*}
\Big\| \sum_{k=1}^{\infty}a_{k}f_{n_{k}} \Big\|_{L_F} 
&\leq&
\Big\| \sum_{k=1}^{\infty}a_{k}u_{k}\Big\|_{L_F} +\Big\| \sum_{k=1}^{\infty}a_{k}v_{k} \Big\|_{L_F} 
\\ &\leq& 4\| (a_{k}) \|_{l_{H}}+ {2}\|(a_{k})\|_{c_{0}}\\ &\leq &
6\| (a_{k})\|_{l_{H}}.
\end{eqnarray*}
On the other hand, applying $(a)$ and once more $\eqref{14012020}$, we obtain the opposite inequality:
$$
\Big\| \sum_{k=1}^{\infty}a_{k}f_{n_{k}}\Big\|_{L_F} \geq \Big\| \sum_{k=1}^{\infty}a_{k}u_{k}\Big\|_{L_F} \geq \frac14\|(a_{k})\|_{l_{H}}.
$$
Thus, our claim is proved. Consequently, it suffices to show that such a subsequence $\{f_{n_{k}}\}_{k=1}^{\infty}$ exists. 

Since $\| f_{n}\|_{L_F}= 1$, by definition of the Luxemburg-Nakano norm, for all  $n = 1, 2, \dots$ we have
$$
\int\limits_{0}^{1}F\Big(\frac{|f_{n}(t)|}{2}\Big)\,dt \leq 1\;\;\mbox{and}\;\;\int\limits_{0}^{1}F(2|f_{n}(t)|)\,dt > 1.
$$
Hence, from absolute continuity of Lebesgue integral it follows that for each $n = 1, 2, \dots$ there is a constant $M_{n}>0$ such that for the functions $x_n:=f_n\chi_{\{|f_n|>M_n\}}$ and $y_n:=f_n\chi_{\{|f_n|\le M_n\}}$ we have

\begin{equation}\label{18122019}
\int\limits_{0}^{1}F\Big(\frac{|x_{n}(t)|}{2}\Big)\,dt \leq 2^{-n}\;\; \text{and}\;\; \int\limits_{0}^{1}F(2|y_{n}(t)|)\,dt > 1.
\end{equation}
Observe that the functions $y_n$, $n = 1,2, \dots$, are disjoint and $y_n\in L_\infty\subset L_{F}^0$, where $L_{F}^0$ is the separable part of the Orlicz space $L_F$ (see Section~\ref{Orlicz}). Moreover, $\| y_{n} \|_{L_F} \leq \| f_{n} \|_{L_F} \leq 1$ and in view of the second inequality in \eqref{18122019} we have $\| y_{n} \|_{L_F} > {1}/{2}$ for all $n = 1,2, \dots$. Thanks to these properties, reasoning precisely as in the proof of \cite[Proposition~3]{LTIII}, we can find a subsequence $\{ y_{n_{k}} \}\subset\{ y_{n} \}$, which is $4$-equivalent to the unit vector basis of an Orlicz sequence space $l_{H},$ where $H \in C_{F}^{\infty}.$ Therefore, the functions $u_k:=y_{n_{k}}$, $v_k:=x_{n_{k}}$, $k = 1,2, \dots$, satisfy the conditions $(a)$, $(b)$ and $x_{n_{k}}=u_k+v_k$, $k = 1,2, \dots$. 

Moreover, let $\| (a_{k}) \|_{c_{0}} \leq 1$. Since $x_{n_{k}}$, $k=1,2,\dots,$ are disjoint, from the first inequality in \eqref{18122019} it follows
\begin{eqnarray*}
\int\limits_{0}^{1} F\Big( \frac{\big(\big|\sum_{k=1}^{\infty}a_{k}v_{k}(t)\big|\big)}{2} \Big)\,dt &\leq&\int\limits_{0}^{1} F\Big( \frac{\Big(\big|\sum_{k=1}^{\infty}v_{k}(t)\big|\Big)}{2} \Big)\,dt\\ 
&=&\sum_{k=1}^{\infty} \int\limits_{0}^{1}F\Big( \frac{|x_{n_{k}}(t)|}{2} \Big)\,dt \leq \sum_{k=1}^{\infty}2^{-n_{k}} \leq 1.
\end{eqnarray*}
Applying now the homogeneity argument precisely in the same way as above, we get inequality $\eqref{120120206}$. Thus, the proof is completed. 
\end{proof}

\begin{rem}
\label{rem1}
One can readily see that in the case when $\{f_{n}\}$ is a normalized sequence of characteristic functions of disjoint subsets of $[0,1]$ the function $H$ from the proof of Lemma~\ref{le2} belongs to the smaller set $E_F^\infty$.
\end{rem}

\begin{theorem}
\label{th3new}
Let $1\le p<\infty$, and let $F$ be an Orlicz function. 
Then, the Orlicz space $L_F$ is $(p-DH)$ if and only if $C_F^\infty\cong\{t^p\}$.
\end{theorem}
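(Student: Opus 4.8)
The plan is to prove both implications by combining Lemma~\ref{le2} (which sends disjoint sequences to unit vector bases of $\ell_H$, $H\in C_F^\infty$) with the converse realization of each $\ell_H$, $H\in C_F^\infty$, as the span of a disjoint sequence in $L_F$.

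\textbf{The ``if'' direction} ($C_F^\infty\cong\{t^p\}\Rightarrow L_F$ is $(p-DH)$). Let $\{f_n\}$ be an arbitrary normalized disjoint sequence in $L_F$. By Lemma~\ref{le2} some subsequence $\{f_{n_k}\}$ is $6$-equivalent to the unit vector basis of $\ell_H$ for a suitable $H\in C_F^\infty$. Since $C_F^\infty\cong\{t^p\}$, there is $C=C(H)$ with $C^{-1}t^p\le H(t)\le Ct^p$ on $(0,1]$; in particular $H\in\Delta_2^0$ and $H(t)\asymp t^p$ near zero. By the criterion for equality of Orlicz sequence spaces recalled in Section~\ref{Orlicz}, the unit vector bases of $\ell_H$ and $\ell_{t^p}=\ell_p$ are then equivalent, so $\{f_{n_k}\}$ is equivalent to the unit vector basis of $\ell_p$. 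Hence $L_F$ is $(p-DH)$.

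\textbf{The ``only if'' direction} ($L_F$ is $(p-DH)\Rightarrow C_F^\infty\cong\{t^p\}$). Fix an arbitrary $H\in C_F^\infty$. The heart of the argument is to produce a normalized disjoint sequence $\{g_n\}$ in $L_F$ with
\[
c^{-1}\|(c_n)\|_{\ell_H}\le \Big\|\sum_n c_n g_n\Big\|_{L_F}\le c\,\|(c_n)\|_{\ell_H}\quad\text{for all }(c_n),
\]
for some $c>0$. I would obtain such $\{g_n\}$ by adapting the construction in the proof of Proposition~\ref{prop1} from characteristic functions (which realize only $H\in E_F^\infty$) to disjoint step functions (which realize the convex combinations filling out $C_F^\infty$). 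Concretely, using the representation \eqref{eq5} of elements of $C_F^\infty$ (with $\Phi$ replaced by $F$), write $H(u)=\lim_l\sum_{s=1}^{n_l}\lambda_s^l F(t_s^l u)/F(t_s^l)$ uniformly on $[0,1]$ with $\min_s t_s^l\to\infty$; for a rapidly growing index sequence $l_n$ put $g_n:=\sum_s t_s^{l_n}\chi_{B_s^{(n)}}$ on pairwise disjoint sets with $m(B_s^{(n)})=\lambda_s^{l_n}/F(t_s^{l_n})$, so that $\int_0^1 F(ug_n)\,du=\sum_s\lambda_s^{l_n}F(ut_s^{l_n})/F(t_s^{l_n})\to H(u)$. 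Choosing $l_n$ so that the total measure stays at most $1$ and $|\int_0^1 F(ug_n)\,du-H(u)|\le 2^{-n}$, disjointness yields $\sum_n H(|c_n|)-1\le\int_0^1 F(|\sum_n c_ng_n|)\,du\le\sum_n H(|c_n|)+1$, which gives the displayed estimate after the homogeneity argument used in Proposition~\ref{prop1}. (This realization is part of the Lindenstrauss--Tzafriri description of disjoint sequences in $L_F$; cf. \cite[\S4.a]{LT1}, \cite{LTIII}.)

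Now apply $(p-DH)$: some subsequence $\{g_{n_k}\}$ is equivalent to the unit vector basis of $\ell_p$, so $\|\sum_{k=1}^m g_{n_k}\|_{L_F}\asymp m^{1/p}$. On the other hand, the realization estimate applied to the subsequence, together with the elementary identity $\|\sum_{j=1}^m e_j\|_{\ell_H}=1/H^{-1}(1/m)$, gives $\|\sum_{k=1}^m g_{n_k}\|_{L_F}\asymp 1/H^{-1}(1/m)$. Comparing the two, $H^{-1}(1/m)\asymp m^{-1/p}$, equivalently $H(v)\asymp v^p$ along $v=v_m:=H^{-1}(1/m)$. Since $H(1)=1$ forces $v_1=1$ and $v_m\downarrow0$ with $v_m/v_{m+1}\to1$, a monotonicity interpolation between consecutive $v_m$ (using that $H$ is increasing) upgrades this to $H(t)\asymp t^p$ on all of $(0,1]$. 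As $H\in C_F^\infty$ was arbitrary, $C_F^\infty\cong\{t^p\}$. Note that this norm-comparison route deliberately avoids any appeal to symmetry of the basis of $\ell_H$ (and thus any separate treatment of the cases $H\notin\Delta_2^0$ or $H$ generate), which are otherwise automatically excluded because they would make $\|\sum_{k=1}^m g_{n_k}\|_{L_F}$ bounded in $m$, contradicting the $\ell_p$ growth.

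\textbf{Main obstacle.} The only nonroutine step is the realization in the ``only if'' direction: one must check that \emph{every} element of $C_F^\infty$ (not merely of $E_F^\infty$, already handled by Proposition~\ref{prop1}) is the Orlicz function of a disjoint sequence, verifying that the block construction fits inside $[0,1]$ while the approximations $\int_0^1 F(ug_n)\,du\to H(u)$ remain uniform. Everything else --- the norm comparison and the passage from the discrete relation $H^{-1}(1/m)\asymp m^{-1/p}$ to equivalence on the whole interval --- is bookkeeping.
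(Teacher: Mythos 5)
Your proposal is correct and follows essentially the same route as the paper: the ``if'' direction is exactly Lemma~\ref{le2} plus $H\asymp t^p$, and the ``only if'' direction realizes each $H\in C_F^\infty$ as a normalized disjoint sequence of step functions built from the convex-combination representation \eqref{eq5} --- which is precisely what the paper means by ``arguing similarly as in the beginning of the proof of Proposition~\ref{prop1}'' --- and then extracts $H(t)\asymp t^p$ from the $(p-DH)$ hypothesis. The only cosmetic differences are that you conclude by comparing norms of partial sums $\big\|\sum_{k=1}^m g_{n_k}\big\|_{L_F}$ (where your parenthetical claim $v_m/v_{m+1}\to 1$ is unjustified, but also unneeded, since monotonicity of $H$ and the bound $1/(m+1)\ge 1/(2m)$ suffice for the interpolation) instead of invoking, as the paper does, the fact that the unit vector basis of an Orlicz sequence space is symmetric.
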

\begin{proof}
Assume first that $L_F$ is a $(p-DH)$ space. Let $H\in C_F^\infty$. Then, arguing similarly as in the beginning of the proof of Proposition~\ref{prop1}, we can find a sequence of normalized disjoint functions $\{f_n\}$ equivalent to the unit vector basis of the sequence Orlicz space $\ell_H$. On the other hand, by the hypothesis, some subsequence $\{f_{n_k}\}\subset \{f_n\}$ is equivalent in $L_F$ to the unit vector basis of $\ell_p$. 
Since the unit vector basis of an arbitrary sequence Orlicz space is a symmetric basic sequence, we conclude that $H(u)\asymp u^p$, $0\le u\le 1$. Therefore, $C_{F}^\infty\cong\{t^p\}$.

Conversely, let $C_{F}^\infty\cong\{t^p\}$. By Lemma~\ref{le2}, every normalized disjoint sequence from $L_F$ contains a subsequence that is $6$-equivalent to the unit vector basis of an Orlicz sequence space $\ell_H$ for some $H\in C_F^\infty$. Since $H(u)\asymp u^p$, $0\le u\le 1$, then obviously this subsequence is equivalent in $L_F$ also to the unit vector basis of $\ell_p$. As a result, we conclude that $L_F$ is $(p-DH)$, and the proof is completed.
\end{proof}

\begin{theorem}
\label{prop2}
Let $1\le p<\infty$, and let $F$ be an Orlicz function. 
The following conditions are equivalent:

(a) $L_F$ is uniformly $(p-DH)$;

(b) $L_F$ is uniformly restricted $(p-DH)$;

(c) there exists a constant $C>0$ such that for every sequence of disjoint sets $E_n\subset [0,1]$, $n=1,2,\dots$, we have
\begin{equation}
\label{eq1new}
C^{-1}m^{1/p}\le\Big\|\sum_{n=1}^m \bar{\chi}_{E_n}\Big\|_{L_F}\le Cm^{1/p},\;\;m=1,2,\dots;
\end{equation}

(d) $E_F^\infty\equiv\{t^p\}$;

(e) $C_F^\infty\equiv\{t^p\}$.
\end{theorem}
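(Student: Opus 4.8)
The plan is to establish the cycle $(a)\Rightarrow(b)\Rightarrow(c)\Rightarrow(d)\Rightarrow(e)\Rightarrow(a)$, the guiding idea being that every inequality in the proof of Proposition~\ref{prop1} carries an explicit constant, so that ``uniform over all sequences'' can be converted into ``uniform over $E_F^\infty$'' and back. The implication $(a)\Rightarrow(b)$ is immediate, since each normalized family $\{\bar{\chi}_{E_n}\}$ is in particular a normalized disjoint sequence, so a uniform $(p-DH)$ constant serves as a uniform restricted $(p-DH)$ constant. For $(b)\Rightarrow(c)$ I would unwind the definitions: the uniform restricted constant $C$ produces, for each disjoint family, a subsequence that is $C$-equivalent to the unit vector basis of $\ell_p$, and evaluating that equivalence on the vector $(1,\dots,1,0,\dots)$ yields exactly the two-sided bound of condition $(c)$ with the same $C$.

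For $(c)\Rightarrow(d)$ I would fix $H\in E_F^\infty$ and invoke the first part of the proof of Proposition~\ref{prop1} with $\varepsilon=\tfrac12$ to obtain a disjoint family $\{E_n\}$ whose normalized characteristic functions are $(1\pm\tfrac12)$-equivalent to the unit vector basis of $\ell_H$. Because that basis is $1$-symmetric, the partial-sum norms $\|\sum_{n=1}^m e_n\|_{\ell_H}=1/H^{-1}(1/m)$ are comparable, with absolute constants, to $\|\sum_{n=1}^m\bar{\chi}_{E_n}\|_{L_F}$; combining this with the bound of $(c)$ gives $H^{-1}(1/m)\asymp m^{-1/p}$ with a constant depending only on $C$, hence $H(u)\asymp u^p$ on $(0,1]$ uniformly in $H$, which is precisely $E_F^\infty\equiv\{t^p\}$. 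The step $(d)\Rightarrow(e)$ is the elementary observation already recorded in Section~\ref{Orlicz}: the pointwise bounds $C^{-1}u^p\le H(u)\le Cu^p$ survive convex combinations and uniform limits, so they transfer from $E_F^\infty$ to $C_F^\infty=\overline{\mathrm{conv}}\,E_F^\infty$ with the same constant.

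Finally, for $(e)\Rightarrow(a)$ I would apply Lemma~\ref{le2}: an arbitrary normalized disjoint sequence has a subsequence $6$-equivalent to the unit vector basis of some $\ell_H$ with $H\in C_F^\infty$. Hypothesis $(e)$ supplies one constant $C$ with $C^{-1}u^p\le H(u)\le Cu^p$ for all such $H$, and a direct modular computation then shows that $\ell_H$ is $K$-equivalent to $\ell_p$ with $K=K(C,p)$ independent of $H$. Composing, the chosen subsequence is $6K$-equivalent to the $\ell_p$-basis with a constant not depending on the sequence, which is uniform $(p-DH)$.

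The difficulty lies not in the logical skeleton but in the control of constants, and it concentrates in two places. In $(c)\Rightarrow(d)$ one must verify that the comparison constant obtained for $H$ depends only on the single uniform constant of $(c)$ and not on the auxiliary sequence realizing $H$; this is exactly why it matters that Proposition~\ref{prop1} was proved with tracked constants and that the $\ell_H$-basis is $1$-symmetric (cf. Remark~\ref{rem1}). The sharpest point, however, is the quantitative equivalence $\ell_H\sim\ell_p$ used in $(e)\Rightarrow(a)$: the standard criterion for equality of Orlicz sequence spaces only gives a constant a priori depending on $H$, and upgrading it to a single $K(C,p)$ governed solely by the uniform bound in $C_F^\infty\equiv\{t^p\}$ is the main obstacle on which the whole passage from the pointwise $DH$-property to its uniform version turns.
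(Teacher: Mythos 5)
Your cycle reproduces the paper's own route: the paper proves $(a)\Rightarrow(b)$ as obvious, obtains $(b)\Leftrightarrow(c)\Leftrightarrow(d)$ by tracking the constants in the proof of Proposition~\ref{prop1}, passes $(d)\Rightarrow(e)$ by convexity, and closes $(e)\Rightarrow(a)$ through Lemma~\ref{le2}; your versions of $(a)\Rightarrow(b)$, $(c)\Rightarrow(d)$, $(d)\Rightarrow(e)$ and $(e)\Rightarrow(a)$ are sound, and in particular the ``direct modular computation'' you worry about does work: a uniform bound $C^{-1}u^p\le H(u)\le Cu^p$ on $[0,1]$ gives $\|\cdot\|_{\ell_H}\asymp\|\cdot\|_{\ell_p}$ with a constant $K(C,p)$, exactly as needed. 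The genuine gap is in $(b)\Rightarrow(c)$. Condition $(c)$, as literally stated, demands the two-sided bound for the partial sums $\sum_{n=1}^m\bar{\chi}_{E_n}$ of \emph{every} disjoint sequence, with no subsequence extracted, whereas evaluating the equivalence furnished by $(b)$ on the vector $(1,\dots,1,0,\dots)$ only controls $\big\|\sum_{k=1}^m\bar{\chi}_{E_{n_k}}\big\|_{L_F}$ along the chosen subsequence. This is not a removable defect of your argument: the full-sequence statement is strictly stronger than $(a)$, $(b)$, $(d)$, $(e)$. Indeed, take $F(t)=t^pe^{\sqrt{\log t}}$ for large $t$, extended convexly near the origin. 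Then $F(tu)/F(t)\to u^p$ uniformly on $[0,1]$, so $E_F^\infty=C_F^\infty=\{t^p\}$ and $(a)$, $(b)$, $(d)$, $(e)$ hold with absolute constants; yet for $m\approx F(t)/2$ disjoint sets of equal measure $s=1/F(t)$ one gets
\begin{equation*}
\Big\|\sum_{n=1}^m\bar{\chi}_{E_n}\Big\|_{L_F}=\frac{F^{-1}(1/s)}{F^{-1}(1/(ms))}\asymp t,
\qquad\text{while}\qquad
m^{1/p}\asymp t\,e^{\frac1p\sqrt{\log t}},
\end{equation*}
so the lower estimate in $(c)$ fails for every constant as $t\to\infty$.

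The repair --- and evidently the intended reading, since the paper deduces $(b)\Leftrightarrow(c)\Leftrightarrow(d)$ by ``inspection of constants'' in Proposition~\ref{prop1}, whose condition $(b)$ does contain a subsequence --- is to state $(c)$ as: there is $C>0$ such that every disjoint sequence $\{E_n\}$ admits a subsequence $\{E_{n_k}\}$ satisfying the two-sided bound. With that formulation your proof closes: $(b)\Rightarrow(c)$ becomes exactly your evaluation argument, and in $(c)\Rightarrow(d)$ the passage to a subsequence is harmless precisely because a subsequence of the unit vector basis of $\ell_H$ is $1$-equivalent to the whole basis --- the $1$-symmetry you already invoke. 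You should make this correction of the statement explicit rather than attempt to prove the literal $(c)$, which, as the example above shows, is impossible.
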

\begin{proof}
The implication $(a)\Rightarrow (b)$ is obvious. Next, an inspection of constants appearing in the proof of Proposition \ref{prop1} yields immediately the equivalence $(b)\Leftrightarrow (c)\Leftrightarrow (d)$. The implication $(d)\Rightarrow (e)$ follows easily from definition of the sets 
$E_F^\infty$ and $C_F^\infty$. Finally, appealing to the proof of Theorem \ref{th3new} and taking into account that the constant of equivalence of a given sequence of normalized disjoint functions $\{f_n\}\subset L_F$ to the unit vector basis of $\ell_p$ can be chosen uniformly for all such sequences, we see that $(e)$ implies $(a)$.  
\end{proof}

In the next section, we prove similar results for the (uniform) $(\infty-DH)$-property.

\section{\protect \medskip $(\infty-DH)$ Orlicz spaces.}

Here, we will make use of results by Knaust and Odell about normalized weakly null sequences dominated by the unit vector basis of $c_0$ or $\ell_p$ from the papers \cite{KO1} and \cite{KO2} respectively (more general theorems of such a sort see in \cite{Freeman}). So, it will be convenient to adopt the terminology similar to that used in these papers.

Let $1<p\le\infty$. We say that a Banach lattice $X$ has property $(D_p)$ if every disjoint sequence $\{x_n\}$, $\|x_n\|_X\le 1$, $n=1,2,\dots$, in $X$ admits a subsequence $\{x_{n_k}\}$, which is $C$-dominated, for some $C>0$, by the unit vector basis of $\ell_p$, i.e.,
$$
\Big\|\sum_{k=1}^\infty c_kx_{n_k}\Big\|_X\le C\|(c_k)\|_{\ell_p}$$
for all $(c_k)\in \ell_p$ (as above, by $\ell_\infty$ we mean $c_0$). 
Moreover, $X$ has property $(UD_p)$, if the constant $C$ can be chosen uniformly for all disjoint sequences $\{x_n\}$ in $X$ such that $\|x_n\|_X\le 1$, $n=1,2,\dots$.

Let $1<p\le\infty$ and let $X$ be a Banach lattice. A disjoint sequence $\{x_n\}$  in $X$ is called {\it $u\ell_p$-sequence} (resp. a {\it $C$-$u\ell_p$-sequence}) if $\|x_n\|_X\le 1$ for all $n=1,2,\dots$ and $\{x_n\}$ satisfies an upper $\ell_p$-estimate (resp. the $C$-upper $\ell_p$-estimate). 
We say that a sequence $\{x_n\}$ in $X$ is an $M$-{\it bad} $u\ell_p$-sequence for a constant $M <\infty$ if $\{x_n\}$ is an $u\ell_p$-sequence, and no subsequence of $\{x_n\}$ is an $M$-$u\ell_p$-sequence. An array $\{x_i^n\}_{n,i=1}^\infty$  of elements in $X$ is called a {\it bad} $u\ell_p$-array, if each column $\{x_i^n\}_{i}$  is an $M_n$-{\it bad} $u\ell_p$-sequence for all $n\in\mathbb{N}$ and $M_n\to\infty$ as $n\to\infty$. An array $\{y_i^n\}_{n,i=1}^\infty$ is called a {\it subarray} of an array $\{x_i^n\}_{n,i=1}^\infty$ whenever each column of $\{y_i^n\}_{n,i}$ is a subsequence of $\{x_i^{k_n}\}_{i}$ for some sequence $k_1 < k_2 <\dots$.  Finally, let us say that a bad $u\ell_p$-array $\{x_i^n\}_{n,i=1}^\infty$ satisfies the $\ell_p$-{\it array procedure} if there exists a subarray $\{y_i^n\}$ of $\{x_i^n\}_{n,i}$ and there exist  $a_n>0$ with $\sum_{n=1}^\infty a_n\le 1$ so that the elements $y_i=\sum_{n=1}^\infty a_ny_i^n$, $i=1,2,\dots$ are disjoint and form a sequence having no $u\ell_p$-subsequence.

\begin{propos}
\label{prop5}
Let $1<p\le\infty$. A symmetric space $X$ on $[0,1]$ with property $(D_p)$ has property $(UD_p)$.
\end{propos}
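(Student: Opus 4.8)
The plan is to argue by contradiction, turning a failure of uniformity into a single disjoint sequence that violates $(D_p)$. Assume $X$ has $(D_p)$ but not $(UD_p)$. Then for each constant $C$ there is a disjoint sequence of norm $\le 1$ no subsequence of which is $C$-dominated by the unit vector basis of $\ell_p$. I would apply this along a rapidly growing sequence of constants $M_n\to\infty$ (for concreteness $M_n=4^n$), and then use $(D_p)$ to pass, inside each such sequence, to a subsequence that does obey some upper $\ell_p$-estimate; since any subsequence of an $M_n$-bad sequence remains $M_n$-bad, this yields for every $n$ an $M_n$-bad $u\ell_p$-sequence $\{x_i^n\}_i$. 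These columns assemble into a bad $u\ell_p$-array $\{x_i^n\}_{n,i=1}^\infty$.

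The core of the matter is to realise the $\ell_p$-array procedure for this array, i.e. to glue the columns into one disjoint sequence with no $u\ell_p$-subsequence. Here I would use that $X$ is symmetric: the norm of a disjoint sum depends only on the distributions of its summands, so each $x_i^n$ may be replaced by an equimeasurable copy without altering any within-column combination norm. The only obstruction is measure, since fitting the whole countable array disjointly into $[0,1]$ requires $\sum_{n,i}m(\supp x_i^n)\le 1$. I would secure this by replacing each column with a tail $\{x_i^n:i\ge N_n\}$, choosing $N_n$ so large that its total support has measure at most $2^{-n}$; this is again an $M_n$-bad $u\ell_p$-sequence, and now the array may be taken pairwise disjoint across all indices $(n,i)$.

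With the array globally disjoint, I would fix weights $a_n>0$ with $\sum_n a_n\le 1$ and $a_nM_n\to\infty$ (for instance $a_n=2^{-n}$ against $M_n=4^n$) and set $y_i:=\sum_n a_n x_i^n$. Global disjointness makes $\{y_i\}$ pairwise disjoint, and the triangle inequality gives $\|y_i\|_X\le\sum_n a_n\|x_i^n\|_X\le 1$. To see that $\{y_i\}$ has no $u\ell_p$-subsequence, fix a subsequence $\{y_{i_k}\}_k$ and any finite $M$, and choose $n_0$ with $a_{n_0}M_{n_0}>M$. Since $\{x_{i_k}^{n_0}\}_k$ is a subsequence of the $M_{n_0}$-bad sequence $\{x_i^{n_0}\}_i$, it violates the $M_{n_0}$-upper $\ell_p$-estimate, so there are finitely supported scalars $(b_k)$ with $\|\sum_k b_k x_{i_k}^{n_0}\|_X>M_{n_0}\|(b_k)\|_{\ell_p}$; by the ideal property applied to the disjoint piece $a_{n_0}\sum_k b_k x_{i_k}^{n_0}$ of $\sum_k b_k y_{i_k}$, one obtains $\|\sum_k b_k y_{i_k}\|_X\ge a_{n_0}M_{n_0}\|(b_k)\|_{\ell_p}>M\|(b_k)\|_{\ell_p}$.

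Thus $\{y_i\}$ is a disjoint sequence of norm $\le 1$ admitting no subsequence dominated by the unit vector basis of $\ell_p$, which directly contradicts $(D_p)$, proving the implication. I expect the main obstacle to lie in the array-combination step of the second and third paragraphs: the measure bookkeeping that makes the globally disjoint equimeasurable realisation possible (the tail reduction), together with the verification that the diagonal weighting transfers the badness of each column, amplified through $a_nM_n\to\infty$, into the single sequence. This is exactly the $\ell_p$-array procedure in the spirit of Knaust and Odell; in the present lattice setting it becomes tractable because rearrangement invariance and the ideal property replace the delicate blocking arguments required for general weakly null sequences.
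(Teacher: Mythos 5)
Your proof is correct, and it takes a genuinely different route from the paper's. Both arguments begin the same way: the failure of $(UD_p)$ combined with $(D_p)$ produces a bad $u\ell_p$-array with columns that are $M_n$-bad for $M_n\to\infty$, and rearrangement invariance is used to realize the whole array as pairwise disjoint functions (your explicit tail-trimming so that the total support measure is at most $1$ is a bookkeeping point the paper leaves implicit in its ``without loss of generality''). The divergence is in how the array is collapsed into a single bad disjoint sequence. The paper does not do this by hand: it first applies Rosenthal's $\ell_1$-theorem to make each column weakly null and then invokes the Knaust--Odell theorems (\cite[Theorem~2]{KO2} for $1<p<\infty$, \cite[Theorem~3.3]{KO1} for $p=\infty$) to conclude that the array satisfies the $\ell_p$-array procedure; global disjointness is used there only to ensure that the output sequence $\{y_i\}$ is disjoint. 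You instead observe that once the array is globally disjoint, the array procedure is elementary: for weights with $\sup_n a_nM_n=\infty$, the pointwise inequality $\bigl|\sum_k b_k y_{i_k}\bigr|\ge a_n\bigl|\sum_k b_k x_{i_k}^n\bigr|$ and the ideal property transfer the $M_n$-badness of every column to the diagonal sums $y_i=\sum_n a_n x_i^n$, so no subsequence of $\{y_i\}$ can satisfy any upper $\ell_p$-estimate. This bypasses weak nullness, Rosenthal, and the Knaust--Odell machinery entirely, and it treats $1<p<\infty$ and $p=\infty$ by one and the same argument, where the paper needs two separate citations. The comparison is instructive: the hard content of the Knaust--Odell array procedure lies precisely in settings where the array elements cannot be made disjoint (general weakly null sequences, or Banach lattices ordered by a basis as in \cite{FHSTT}, where uniformity genuinely fails), and your proof correctly isolates rearrangement invariance as the feature that trivializes the gluing step in the symmetric-space case.
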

\begin{proof}
On the contrary, suppose that $X$ fails to have property $(UD_p)$. Then, it is plain that in $X$ there is a bad $u\ell_p$-array $\{x_i^n\}_{n,i=1}^\infty$. Since the space $X$ is symmetric, without loss of generality, we may assume that all the functions $x_i^n$, $n,i=1,2,\dots$, are pairwise disjoint. Moreover, observe that each column $\{x_i^n\}_{i=1}^\infty$  is a disjoint sequence containing a subsequence admitting an upper $\ell_p$-estimate with $p>1$. Hence, by Rosenthal's $\ell_1$ theorem (see e.g. \cite[Theorem~10.2.1]{AK}), it may be assumed also that the sequence $\{x_i^n\}_{i}$ is weakly null for each $n$. Then, by \cite[Theorem~2]{KO2}, if $1<p<\infty$ and, by \cite[Theorem~3.3]{KO1}, if $p=\infty$ we conclude that this array satisfies the $\ell_p$-array procedure. Thus, there are  $a_n>0$, $\sum_{i=1}^\infty a_n\le 1$, an increasing sequence of positive integers $\{k_n\}_{n=1}^\infty$, and subsequences $\{y_i^n\}_{i=1}^\infty$ of the sequences $\{x_i^{k_n}\}_{i=1}^\infty$, $n=1,2,\dots$, such that the sequence $y_i:=\sum_{n=1}^\infty a_ny_i^n$, $i=1,2,\dots$, has no subsequence admitting an upper $\ell_p$-estimate. Observe that the elements $y_i$, $i=1,2,\dots$, are pairwise disjoint and $\|y_i\|_X\le 1$.  Since this contradicts the hypothesis that $X$ possesses property $(D_p)$, the proof is completed.
\end{proof}

\begin{corollary}
\label{cor1new}
Every $(\infty-DH)$ symmetric space is uniformly $(\infty-DH)$.
\end{corollary}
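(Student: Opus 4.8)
The plan is to route the argument through property $(D_\infty)$ and Proposition~\ref{prop5}, exploiting the fact that when $p=\infty$ one half of the required equivalence is automatic. The first thing I would record is that \emph{any} normalized disjoint sequence $\{x_n\}$ in a Banach function lattice already satisfies the lower $c_0$-estimate with constant $1$. Indeed, since the $x_n$ are disjointly supported, at each point at most one term of $\sum_k c_kx_k$ is nonzero, so $|c_jx_j|\le|\sum_k c_kx_k|$ pointwise for every $j$; by the ideal property this gives $\|\sum_k c_kx_k\|_X\ge|c_j|\,\|x_j\|_X=|c_j|$, and taking the supremum over $j$ yields $\|\sum_k c_kx_k\|_X\ge\sup_j|c_j|$. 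Consequently, for normalized disjoint sequences, being $C$-equivalent to the unit vector basis of $c_0$ is the \emph{same} as satisfying a $C$-upper $c_0$-estimate: the entire content of the $(\infty-DH)$-property is the upper estimate.

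Next I would show that $(\infty-DH)$ forces property $(D_\infty)$. Given a disjoint sequence $\{x_n\}$ with $\|x_n\|_X\le1$, I discard the zero terms and normalize, setting $\tilde x_n:=x_n/\|x_n\|_X$. These form a normalized disjoint sequence, so by $(\infty-DH)$ some subsequence $\{\tilde x_{n_k}\}$ is $C$-equivalent to the $c_0$-basis, and in particular $C$-dominated by it. Writing $d_k:=c_k\|x_{n_k}\|_X$ and noting that $\sum_k c_kx_{n_k}=\sum_k d_k\tilde x_{n_k}$ with $\sup_k|d_k|\le\sup_k|c_k|$ (because $\|x_{n_k}\|_X\le1$), I obtain $\|\sum_k c_kx_{n_k}\|_X\le C\sup_k|c_k|$. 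Thus $\{x_{n_k}\}$ is $C$-dominated by the $c_0$-basis, i.e.\ $X$ has property $(D_\infty)$.

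Now the heavy lifting is delegated. By Proposition~\ref{prop5} applied with $p=\infty$, property $(D_\infty)$ upgrades to property $(UD_\infty)$, so there is a single constant $C_0$ such that every disjoint sequence with norms $\le1$ admits a subsequence that is $C_0$-dominated by the $c_0$-basis. To close the loop I would return to the first observation: for an arbitrary normalized disjoint sequence $\{x_n\}$, property $(UD_\infty)$ produces a subsequence satisfying the $C_0$-upper $c_0$-estimate, while the automatic lower estimate supplies the constant $1$; hence that subsequence is $C_0$-equivalent to the $c_0$-basis with $C_0$ \emph{independent} of the sequence. This is exactly the statement that $X$ is uniformly $(\infty-DH)$.

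Once Proposition~\ref{prop5} is in hand the argument is essentially bookkeeping, so I do not expect a genuine obstacle. The only points demanding care are the passage between the normalization conventions of $(\infty-DH)$ (normalized sequences) and of $(D_\infty)$/$(UD_\infty)$ (norms $\le1$), carried out via the rescaling $d_k=c_k\|x_{n_k}\|_X$, and the verification that the automatic lower $c_0$-estimate really holds in an arbitrary Banach function lattice through the ideal property; both are elementary, which is why the substantive work is concentrated in Proposition~\ref{prop5}.
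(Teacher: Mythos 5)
Your proof is correct and follows essentially the same route as the paper: the lower $c_0$-estimate with constant $1$ is automatic for normalized disjoint sequences in a Banach lattice, so the $(\infty-DH)$ property reduces to property $(D_\infty)$, and Proposition~\ref{prop5} then upgrades this to $(UD_\infty)$, i.e.\ to uniform $(\infty-DH)$. The paper states this in two lines; you merely spell out the bookkeeping (discarding zero terms, the rescaling $d_k=c_k\|x_{n_k}\|_X$) that the paper leaves implicit.
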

\begin{proof}
Since for any disjoint normalized sequence $\{x_n\}_{n=1}^\infty$ from a Banach lattice $X$ and all $c_n\in\mathbb{R}$ we have
$$
\Big\|\sum_{n=1}^\infty c_nx_n\Big\|_X\ge \|(c_n)\|_{c_0},$$
then it suffices to apply Proposition~\ref{prop5}.   
\end{proof}


\begin{corollary}
\label{cor2new}
For every Orlicz function $F$ the following conditions are equivalent:

(i) $L_F$ is uniformly $(\infty-DH)$;

(ii) $L_F$ is $(\infty-DH)$;

(iii) each function from the set $C_F^\infty$ is generate.
\end{corollary}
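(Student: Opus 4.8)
The plan is to break the cyclic equivalence into the ``free'' pair (i)$\,\Leftrightarrow\,$(ii), which follows from the general results already established, and the substantive pair (ii)$\,\Leftrightarrow\,$(iii), which I would handle by transferring the $(\infty-DH)$ question into the language of the Orlicz sequence spaces $\ell_H$, $H\in C_F^\infty$, through Lemma~\ref{le2}. First, (i)$\,\Rightarrow\,$(ii) is immediate from the definitions, since uniform $(\infty-DH)$ is formally stronger than $(\infty-DH)$, and (ii)$\,\Rightarrow\,$(i) is precisely Corollary~\ref{cor1new} applied to $X=L_F$. Thus (i) and (ii) are equivalent with no extra work, and everything reduces to (ii)$\,\Leftrightarrow\,$(iii).

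For (iii)$\,\Rightarrow\,$(ii) I would take an arbitrary normalized disjoint sequence $\{f_n\}$ in $L_F$. By Lemma~\ref{le2} it has a subsequence $6$-equivalent to the unit vector basis of $\ell_H$ for some $H\in C_F^\infty$. By (iii) this $H$ is generate, so $\ell_H=\ell_\infty$ with equivalence of norms; in particular, on finitely supported sequences the $\ell_H$-norm is comparable to the sup-norm, i.e. the unit vector basis of $\ell_H$ is equivalent to the $c_0$-basis. Composing the two equivalences shows that the chosen subsequence of $\{f_n\}$ is equivalent to the $c_0$-basis, which is exactly what $(\infty-DH)$ requires.

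For (ii)$\,\Rightarrow\,$(iii) I would argue by contraposition: suppose some $H\in C_F^\infty$ is non-generate, so $H(t)>0$ for all $t>0$. Arguing as in the first part of the proof of Theorem~\ref{th3new} (realizing members of $C_F^\infty$ by disjoint sequences, cf. the construction at the start of the proof of Proposition~\ref{prop1} and \cite[\S\,4.a]{LT1}), I would build a normalized disjoint sequence $\{f_n\}$ in $L_F$ equivalent to the unit vector basis $\{e_k\}$ of $\ell_H$. If $L_F$ were $(\infty-DH)$, some subsequence $\{f_{n_k}\}$ would be equivalent to the $c_0$-basis, hence so would the corresponding subsequence $\{e_{n_k}\}$. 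Since the $\ell_H$-norm depends only on the multiset of coefficients, $\{e_{n_k}\}$ is isometrically equivalent to the full basis $\{e_k\}$, so $\{e_k\}$ itself would be equivalent to the $c_0$-basis. Evaluating both sides on $\sum_{k=1}^m e_k$ yields the contradiction: this norm equals $1$ in $c_0$ for every $m$, while in $\ell_H$ it equals $1/H^{-1}(1/m)$, which tends to $\infty$ as $m\to\infty$ precisely because $H$ is non-generate (so $H^{-1}(1/m)\to H^{-1}(0)=0$). This forces every $H\in C_F^\infty$ to be generate.

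The step requiring the most care is (ii)$\,\Rightarrow\,$(iii): one must ensure the existence of a disjoint normalized sequence in $L_F$ realizing an \emph{arbitrary} $H\in C_F^\infty$ (not merely an $H\in E_F^\infty$, for which Proposition~\ref{prop1} suffices), and then be precise that an Orlicz (hence symmetric) basic sequence is equivalent to each of its subsequences, so that the $(\infty-DH)$-subsequence genuinely forces the whole basis to collapse onto $c_0$. The closing estimate $\bigl\|\sum_{k=1}^m e_k\bigr\|_{\ell_H}=1/H^{-1}(1/m)\to\infty$ is the quantitative heart that separates the non-generate case from the generate one.
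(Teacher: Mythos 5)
Your proof is correct and takes essentially the same route as the paper: the equivalence (i)$\Leftrightarrow$(ii) is obtained from Corollary~\ref{cor1new}, while (ii)$\Leftrightarrow$(iii) is proved by combining Lemma~\ref{le2} with the realization of an arbitrary $H\in C_F^\infty$ by a normalized disjoint sequence, exactly as in Theorem~\ref{th3new}, together with the observation that $H$ is generate if and only if the unit vector basis of $\ell_H$ spans $c_0$. The subtleties you flag (realizing $H\in C_F^\infty$ rather than merely $H\in E_F^\infty$, and using the symmetry of the Orlicz basis to pass from a subsequence back to the whole basis) are precisely the points the paper handles by reference to Proposition~\ref{prop1} and Theorem~\ref{th3new}.
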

\begin{proof}
Note that a function $H\in C_F^\infty$ is generate if and only if the unit vector basis in the Orlicz space $l_H$ spans $c_0$. Therefore, the equivalence $(ii)\Leftrightarrow (iii)$ can be obtained, by using Proposition~\ref{prop1} and Lemma~\ref{le2}, in the same way as a similar result for finite $p$ in Theorem~\ref{th3new}. Thus, the result follows from Corollary~\ref{cor1new}.
\end{proof}

\section{\protect \medskip Uniform DH-property and duality of DH-property in the class of Orlicz spaces.}

We start with duality results related to the non-reflexive case.
The following lemma establishes a useful link between the uniform restricted $(1-DH)$-property of Orlicz spaces and the classical notion of a regularly varying function of order $1$ at $\infty$.

\begin{lemma}
\label{le1}
Let $F$ be an Orlicz function. Then, $E_F^\infty\equiv\{t\}$ if and only if $F$ is equivalent to an Orlicz function that is regularly varying  of order $1$ at $\infty$.
\end{lemma}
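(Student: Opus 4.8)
The plan is to pass to a logarithmic scale, translate the hypothesis $E_F^\infty\equiv\{t\}$ into a uniform oscillation estimate, and then treat the two implications separately, the forward one by a smoothing construction. Throughout I would write $\rho(s):=\log F(2^s)-s$ (recall $\log=\log_2$), so that $F(2^s)=2^{\,s+\rho(s)}$ and, with $t=2^{-a}$, $y=2^s$,
\[
\frac{F(ty)}{F(y)}=t\cdot 2^{\,\rho(s-a)-\rho(s)}.
\]
A cluster-point argument (using that each $H\in E_F^\infty$ is a uniform limit of functions $F(\cdot\,y_n)/F(y_n)$ with $y_n\to\infty$, and that these form a relatively compact family in $C[0,1]$ by \cite[Lemma~4.a.6]{LT1}) shows that $E_F^\infty\equiv\{t\}$, with constant $C_0$, is equivalent to the following: there is $\gamma<\infty$ such that for every $L>0$ there is a threshold $S_L$ with $|\rho(s)-\rho(s')|\le\gamma$ whenever $s\ge S_L$ and $s'\in[s-L,s]$. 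Thus $\rho$ oscillates by at most the \emph{fixed} amount $\gamma$ on every window once one is far enough out, only the threshold $S_L$ depending on the window length $L$. I would also record the elementary consequence that this forces $\rho(s)/s\to0$ (chain $\lfloor s/L\rfloor$ windows of length $L$ to get $|\rho(s)|\le\gamma\,s/L+O_L(1)$, then let $L\to\infty$).

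For the implication ($\Leftarrow$): if $G$ is regularly varying of order $1$, the Uniform Convergence Theorem for regular variation gives $G(ty)/G(y)\to t$ uniformly on each $[\delta,1]$, and since $\{G(\cdot\,y)/G(y)\}$ is relatively compact in $C[0,1]$, every cluster point equals $t$ on $(0,1]$; hence $E_G^\infty=\{t\}$ literally, so trivially $E_G^\infty\equiv\{t\}$. If $F$ is equivalent to such a $G$, say $C^{-1}G\le F\le CG$ for large argument, then $F(ty)/F(y)\asymp G(ty)/G(y)$ uniformly in $t\in(0,1]$ for large $y$, whence every $H\in E_F^\infty$ satisfies $C^{-2}t\le H(t)\le C^2t$; that is $E_F^\infty\equiv\{t\}$.

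The substantial direction is ($\Rightarrow$). Assuming the oscillation estimate, I would manufacture an equivalent regularly varying function by smoothing $\rho$ on the log scale. Fix a window $\lambda(s)\to\infty$ growing slowly enough that $S_{\lambda(s)}\le s/2$ for all large $s$ (possible since each $S_L$ is finite), and set
\[
\rho_G(s):=\frac1{\lambda(s)}\int_s^{s+\lambda(s)}\rho(u)\,du,\qquad G_0(2^s):=2^{\,s+\rho_G(s)}.
\]
Because every $u$ in the window satisfies $|\rho(u)-\rho(s)|\le\gamma$ (the oscillation estimate applies, as $s\ge S_{\lambda(s)}$), one gets $|\rho_G(s)-\rho(s)|\le\gamma$, i.e. $G_0\asymp F$. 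The crux is to show $\rho_G$ is slowly varying in the additive sense $\rho_G(s+h)-\rho_G(s)\to0$ for each fixed $h$, which, via the displayed identity, is exactly regular variation of order $1$ for $G_0$. This is where averaging (rather than $\rho$ itself) is essential: differentiating a running average over a window of length $\lambda$ produces $(\rho(s+\lambda)-\rho(s))/\lambda$, and the oscillation bound caps the numerator by $\gamma$, forcing the rate to vanish as $\lambda(s)\to\infty$. The care needed, and the main obstacle, is that the window length itself varies with $s$; to make the estimate on $\rho_G(s+h)-\rho_G(s)$ clean I would either take $\lambda$ to vary slowly (e.g. constant on dyadic blocks with matched values at the joins) or replace the single average by a smooth-kernel or iterated average.

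Finally I would convexify: $G_0$ need not be an Orlicz function, but setting $G(t):=\int_0^t G_0(u)/u\,du$ — exactly the device used in Proposition~\ref{prop3} — yields a convex Orlicz function, and Karamata's theorem (applied to the slowly varying $G_0(u)/u$) gives $G(t)\sim G_0(t)$, so $G\asymp F$ and $G$ remains regularly varying of order $1$. In summary, the reduction to the oscillation inequality and the convexification step are comparatively routine, while the slow-variation estimate for the smoothed function is the genuine technical heart of the argument.
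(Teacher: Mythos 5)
Your log-scale reduction (the equivalence of $E_F^\infty\equiv\{t\}$ with the uniform oscillation estimate for $\rho(s)=\log F(2^s)-s$) and your treatment of the implication ($\Leftarrow$) via the Uniform Convergence Theorem are correct, and your route is genuinely different from the paper's: the paper obtains \emph{both} implications by soft compactness arguments from Kalton's ready-made characterization \cite[Lemma~1.6]{Kal} of Orlicz functions equivalent to regularly varying ones, whereas you undertake to re-prove the hard implication from scratch. But precisely at what you yourself call the ``genuine technical heart'' the proof has a gap: the slow-variation estimate $\rho_G(s+h)-\rho_G(s)\to 0$ is only motivated by the computation for a \emph{constant} window, and of the two fixes you offer for the varying window $\lambda(s)$, the first one fails. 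If $\lambda$ is constant on dyadic blocks, then at a block boundary the two competing averages (window lengths $\lambda_k$ and $\lambda_{k+1}$) can differ by an amount comparable to $\gamma$: take $\rho$ a non-decreasing staircase with unit jumps at points whose gaps tend to infinity (this satisfies the oscillation estimate with $\gamma=1$), and place a jump point at distance $\lambda_{k+1}/2$ after a join; then one average picks up the jump and the other does not, producing a discontinuity of size $\gamma/2$ at infinitely many joins, so the glued function is not slowly varying. If instead you restore continuity by adding a constant on each block (``matching values at the joins''), these constants can drift unboundedly, destroying the bound $|\rho_G-\rho|\le\gamma$ and with it the equivalence $G_0\asymp F$.

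Your second suggested fix does work, but it needs the argument you omitted. Take $\lambda$ of class $C^1$, increasing, $\lambda(s)\to\infty$, $0\le\lambda'\le 1$, growing slowly enough that $S_{\lambda(s)}\le s/2$. Writing $e(u):=\rho(u)-\rho(s)$, so that $|e|\le\gamma$ on $[s,s+\lambda(s)]$ by the oscillation estimate, the terms involving $\rho(s)$ cancel exactly in the differentiation and
\[
\rho_G'(s)=\frac{(1+\lambda'(s))\,e(s+\lambda(s))}{\lambda(s)}-\frac{\lambda'(s)}{\lambda(s)^2}\int_s^{s+\lambda(s)}e(u)\,du,
\qquad\text{whence}\quad |\rho_G'(s)|\le\frac{3\gamma}{\lambda(s)}\to 0,
\]
which yields $\rho_G(s+h)-\rho_G(s)\to0$ for every fixed $h$. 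This exact cancellation is essential, since $\rho$ itself may be unbounded (e.g.\ $F(t)=t\log t$). Two further points should be recorded to finish: convexity of $G(t)=\int_0^t G_0(u)/u\,du$ is not automatic but holds because convexity of $F$ makes $\rho$ non-decreasing and the sliding average with increasing $\lambda$ preserves monotonicity, so $G_0(u)/u=2^{\rho_G(\log u)}$ increases; and only then does Karamata give $G\sim G_0$. With these insertions your argument closes; as written, the decisive estimate is asserted rather than proved, and one of your two proposed routes to it is a dead end.
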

\begin{proof}
Assume first that $F$ is equivalent to a regularly varying Orlicz function of order $1$ at $\infty$. Then, by \cite[Lemma~1.6]{Kal}, there exists a constant $c>0$ such that for any $0<u_0\le 1$ there is $t_0>0$ so for all $t\ge t_0$ and $u\in [u_0,1]$ we have
$$
F(ut)\ge cuF(t).$$
On the other hand, by definition, for an arbitrary $H\in E_F^\infty$, there is a sequence $t_n\uparrow\infty$ such that
$$
H(u):=\lim_{n\to\infty}\frac{F(ut_n)}{F(t_n)}.$$
Combining this with the preceding inequality, we conclude that $H(u)\ge cu$ for all $u\in [u_0,1]$. Since the constant $c$ does not depend on $u_0$, the last estimate may be extended to the whole interval $[0,1]$. Taking into account that $H(u)\le u$, $0\le u\le 1$, we get $E_F^\infty\equiv\{t\}$.

To prove the converse, we assume, on the contrary, that $F$ is not equivalent to any regularly varying Orlicz function of order $1$ at $\infty$. Applying once more \cite[Lemma~1.6]{Kal}, we get then that for arbitrary $C>0$ there exists $0<u_0\le 1$ such that for every $t_0>0$ we can find $t\ge t_0$ and $u\in [u_0,1]$ satisfying the inequality:
$$
F(ut)\le\frac{1}{C}uF(t).$$
This assumption implies that for each $n\in\mathbb{N}$ there are $0<u_0^n\le 1$, a sequence $\{t_k^n\}$, $\lim_{k\to\infty}t_k^n=\infty$, and $u_k^n\in [u_0^n,1]$ such that 
\begin{equation}
\label{equa4new}
F(u_k^nt_k^n)\le\frac{1}{n}u_k^nF(t_k^n),\;\;n,k=1,2,\dots
\end{equation}
Passing to subsequences for every $n\in\mathbb{N}$ (without changing the notation), we can assume that $u_k^n\to u_n\in [u_0^n,1]$ as $k\to\infty$ and 
$$
H_n(u):=\lim_{k\to\infty}\frac{F(ut_k^n)}{F(t_k^n)}\in E_F^\infty$$ 
(with uniform convergence on $[0,1]$). 

Fix $n\in\mathbb{N}$. Then, since $u_n>0$, there is a positive integer $k_0$ (depending on $n$) such that for all $k\ge k_0$ and $0\le u\le 1$ we have 
$$
H_n(u)\le \frac{F(ut_k^n)}{F(t_k^n)}+\frac1n u_n.$$
Inserting in this inequality $u=u_k^n$, $k\ge k_0$, and using estimate \eqref{equa4new}, we obtain
$$
H_n(u_k^n)\le \frac{F(u_k^nt_k^n)}{F(t_k^n)}+\frac1n u_n\le \frac1n u_k^n+\frac1n u_n,\;\;k\ge k_0.$$
Observe that, by continuity, $H_n(u_k^n)\to H(u_n)$ as $k\to\infty$ for each $n$. Hence, taking in the last inequality the limit as $k\to\infty$ yields 
$$
H_n(u_n)\le \frac2n u_n,\;\;n=1,2,\dots$$
Since $H_n\in E_F^\infty$, clearly, the last inequality implies that $E_F^\infty\not\equiv\{t\}$. This contradiction completes the proof of the lemma.
\end{proof}

It is known that if $X$ is an $(\infty-DH)$ Banach lattice, then the dual $X^*$ is $(1-DH)$ \cite[Theorem~2.2]{FTT-09}. In the same paper it was showed that the converse result, in general, is not true. In particular, the Lorentz space $L_{p,1}[0,1]$, $1<p<\infty$, is $(1-DH)$ but its dual $L_{q,\infty}[0,1]$, $1/p+1/q=1$, fails to be $(DH)$, because it contains a disjoint sequence equivalent to the unit vector basis of $\ell_p$. Moreover, we establish here lack of duality for $(1-DH)$-property even inside the class of Orlicz spaces. However, we begin with a positive result for the uniform $(1-DH)$ and $(\infty-DH)$-properties. 

\begin{theorem}
\label{th1}
Let $F$ and $G$ be mutually Young conjugate Orlicz functions. Then the following conditions are equivalent:

(i) there exists a constant $C_0>0$ such that
\begin{equation}
\label{eq8}
\lim_{t\to\infty}\frac{G(C_0t)}{G(t)}=\infty.
\end{equation}

(ii) $L_G$ is uniformly $(\infty-DH)$;

(iii) $L_F$ is uniformly $(1-DH)$;

(iv) $L_F$ is uniformly restricted $(1-DH)$;

(v) $F$ is equivalent to a regularly varying Orlicz function of order $1$ at $\infty$;

(vi) $E_F^\infty\equiv\{t\}$.

(vii) $L_G$ is uniformly restricted $(\infty-DH)$;

\end{theorem}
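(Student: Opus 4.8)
The plan is to separate the seven conditions into an \emph{$F$-side}, consisting of (iii), (iv), (v), (vi), and a \emph{$G$-side}, consisting of (i), (ii), (vii); to settle the equivalences inside each side with the tools already developed; and then to join the two sides by a single conjugacy bridge, which is where the real work lies. The $F$-side is immediate: Theorem~\ref{prop2} applied with $p=1$ gives (iii)$\Leftrightarrow$(iv)$\Leftrightarrow$(vi) (these are precisely its conditions (a), (b), (d) for $p=1$), and Lemma~\ref{le1} gives (v)$\Leftrightarrow$(vi). Thus (iii)--(vi) are mutually equivalent.

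On the $G$-side the pivot is the elementary observation that (i) is equivalent to $E_G^\infty$ being \emph{uniformly generate}, i.e.\ to the existence of $u_0\in(0,1)$ with $K(u_0)=0$ for every $K\in E_G^\infty$. Indeed, if (i) holds with constant $C_0$ (necessarily $C_0>1$), then any $K\in E_G^\infty$ arising from $\tau_n\uparrow\infty$ satisfies $K(1/C_0)=\lim_n G(\tau_n/C_0)/G(\tau_n)=0$; conversely, were (i) to fail for $C_0=1/u_0$, some $t_n\uparrow\infty$ with $G(C_0t_n)/G(t_n)$ bounded would yield a $K\in E_G^\infty$ with $K(u_0)>0$. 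Granting this, (i)$\Leftrightarrow$(vii) follows from the $p=\infty$ analogue of Proposition~\ref{prop1}: normalized disjoint characteristic functions realize any $K\in E_G^\infty$ as $\ell_K$, and $\ell_K=c_0$ with uniformly equivalent norms exactly when $K$ is uniformly generate. For (ii)$\Leftrightarrow$(vii) I invoke Corollary~\ref{cor2new}, by which (ii) means that every function of $C_G^\infty=\overline{{\rm conv}\,E_G^\infty}$ is generate; the forward implication to ``$E_G^\infty$ uniformly generate'' is clear (a common zero survives convex combinations and uniform limits), and for the converse I argue that if the zeros of $K\in E_G^\infty$ accumulated at $0$, an infinite convex combination $\sum_j\lambda_jK_j\in C_G^\infty$ would vanish only at $0$ and hence fail to be generate.

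It remains to link the two sides, and this is the main obstacle. I would prove (vi)$\Leftrightarrow$(i) by conjugating the Orlicz sequence spaces spanned by \emph{one and the same} family of disjoint characteristic functions in $L_F$ and in $L_G$. Fixing disjoint sets $E_n$ and setting $t_k=F^{-1}(1/m(E_{n_k}))$, $\tau_k=G^{-1}(1/m(E_{n_k}))$, so that $F(t_k)=G(\tau_k)=1/m(E_{n_k})$, the functions $\bar{\chi}_{E_{n_k}}$ span $\ell_H$ in $L_F$ with $H=\lim_k F(t_k\,\cdot)/F(t_k)\in E_F^\infty$ and span $\ell_K$ in $L_G$ with $K=\lim_k G(\tau_k\,\cdot)/G(\tau_k)\in E_G^\infty$. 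A direct computation gives $\big(F(t_k\,\cdot)/F(t_k)\big)^*(v)=G(vs_k)/F(t_k)=K_k\big((s_k/\tau_k)v\big)$ with $s_k=F(t_k)/t_k$, while the standard estimate $s\le F^{-1}(s)G^{-1}(s)\le 2s$ forces $s_k/\tau_k\in[\tfrac12,1]$. Passing to a subsequence along which $s_k/\tau_k\to\theta\in[\tfrac12,1]$ and using that conjugation is continuous on $[0,1]$ for these normalized functions (the relevant suprema are attained on $u\le1$, since $H_k'(1)\ge1$), I obtain $H^*(v)=K(\theta v)$; since Luxemburg norms scale exactly under a constant dilation of the argument, this means $\ell_K=\ell_{H^*}=(\ell_H)^*$ with an \emph{absolute} constant, and crucially requires no $\Delta_2^\infty$-type doubling of $G$.

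To finish, note that $H(u)\le u$ always, so (vi), namely $E_F^\infty\equiv\{t\}$, says exactly that $H(u)\ge C^{-1}u$ uniformly over $H\in E_F^\infty$; by Legendre duality this is equivalent to $H^*(v)=0$ for $v\le C^{-1}$, i.e.\ to $H^*$ (hence $K$) being generate at a point bounded away from $0$. Running the correspondence $H\mapsto K$ with the roles of $F$ and $G$ interchanged shows it exhausts $E_G^\infty$, whence (vi) holds iff $E_G^\infty$ is uniformly generate, that is (vi)$\Leftrightarrow$(i). The delicate point throughout is the uniformity of this bridge: the dilation constant $\theta$ and the continuity of conjugation must be controlled independently of the family $\{E_n\}$, and every $K\in E_G^\infty$ must be seen to arise as the conjugate-partner of some $H\in E_F^\infty$. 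Since all constants coming from Theorem~\ref{prop2}, Proposition~\ref{prop1} and the Young inequality are absolute, once this uniform conjugacy is in hand the equivalence of all seven conditions follows.
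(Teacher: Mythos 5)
Your proposal is correct in substance, but it takes a genuinely different route from the paper's for the hard implications. The paper handles the $F$-side exactly as you do (Theorem~\ref{prop2} with $p=1$ plus Lemma~\ref{le1} give (iii)$\Leftrightarrow$(iv)$\Leftrightarrow$(v)$\Leftrightarrow$(vi)), but it closes the cycle by space-level arguments: (i)$\Rightarrow$(ii) via an Alexopoulos-type modular estimate showing $\int_0^1 G(|g_n|/(2C_0))\,dt\to 0$ for every normalized disjoint $\{g_n\}\subset L_G$; (ii)$\Rightarrow$(iii) by choosing a disjoint biorthogonal system in $L_G$ (a norming subspace of $L_F^*$) and dualizing the uniform upper $c_0$-estimate into a uniform lower $\ell_1$-estimate; (iv)$\Rightarrow$(vii) via the norm-one averaging projection onto the spans of the characteristic functions together with the fundamental-function inequality \eqref{eq9a}; and (vii)$\Rightarrow$(i) by a direct modular contradiction. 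You instead collapse all of (i), (ii), (vii) into the single statement ``$E_G^\infty$ is uniformly generate'' --- your reduction of (i) to uniform generateness, your infinite-convex-combination argument inside $C_G^\infty$, and the $p=\infty$ analogue of Proposition~\ref{prop1} are all sound (the latter is a routine adaptation that the paper itself relies on in Corollary~\ref{cor2new}) --- and you join the two sides by the conjugacy identity $\bigl(F(t_k\cdot)/F(t_k)\bigr)^*(v)=K_k\bigl((s_k/\tau_k)v\bigr)$ passed to the limit. That bridge does work as you describe: the suprema defining the conjugates are attained on $[0,1]$ because $F_k'(1)\ge 1$, so the restricted conjugate is continuous under uniform convergence on $[0,1]$; $s_k/\tau_k$ is pinched between absolute constants by $t\le F^{-1}(t)G^{-1}(t)\le 2t$; and Legendre duality converts the uniform lower bound $H(u)\ge C^{-1}u$ in (vi) exactly into uniform vanishing of the partner $K\in E_G^\infty$ near $0$, with surjectivity of the pairing obtained by starting the construction from a realizing sequence of $K$. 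What each approach buys: the paper's proof is self-contained at the level of the spaces and avoids the Knaust--Odell machinery entirely for this theorem, whereas yours is more conceptual, concentrating all the $F$--$G$ duality in one pointwise Legendre-type identity between $E_F^\infty$ and $E_G^\infty$, at the price of invoking Corollary~\ref{cor2new} (hence Proposition~\ref{prop5}) for the implication toward (ii) and of the extra care needed to conjugate functions defined only on $[0,1]$. Two blemishes, neither load-bearing: the parenthetical claim $\ell_K=\ell_{H^*}=(\ell_H)^*$ is unjustified (and in general false) without $\Delta_2$-type assumptions, but your final argument uses only the pointwise identity, never this duality of sequence spaces; and (ii)$\Rightarrow$(vii) is anyway immediate by restriction to characteristic functions, so Corollary~\ref{cor2new} is really needed only for the converse direction.
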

\begin{proof}
$(i)\Rightarrow (ii)$.
Let $\{g_n\}_{n=1}^\infty$ be a normalized disjoint sequence in $L_G$. Reasoning similarly as in \cite[Theorem~2.8]{Al1994}, we show that 
\begin{equation}
\label{eq9}
\lim_{n\to\infty}\int_0^1 G(|g_n(t)|/(2C_0))\,dt=0.
\end{equation}

By \eqref{eq8}, $\lim_{t\to\infty}{G(t/C_0)}/{G(t)}=0$. Therefore, given an $\varepsilon>0$ there is $t_0>0$ such that for all $t\ge t_0$
$$
\frac{G(t/(2C_0))}{G(t/2)}<\frac{\varepsilon}{2}.$$
Furthermore, denoting $E_n:={\rm supp}\,g_n$, $n=1,,2,\dots$, we have $m(E_n)\to 0$ as $n\to\infty$, and hence there exists $N\in\mathbb{N}$ such that for $n\ge N$
$$
m(E_n)\le \frac{\varepsilon}{2G(t_0/C_0)}.$$
Since $\|g_n\|_{L_G}=1$, then $\int_0^1 G(|g_n(t)|/2)\,dt\le 1$, $n=1,2,\dots$. Therefore, combining the above inequalities, we get for all $n\ge N$
\begin{eqnarray*}
\int_0^1 G(|g_n(t)|/(2C_0))\,dt &=& \int_{\{|g_n|<t_0\}\cap E_n} G(|g_n(t)|/(2C_0))\,dt + \int_{\{|g_n|\ge t_0\}} G(|g_n(t)|/(2C_0))\,dt\\ &\le& 
G(t_0/C_0)m(E_n)+\frac{\varepsilon}{2}\int_{0}^1 G(|g_n(t)|/2)\,dt\le\varepsilon,
\end{eqnarray*}
and \eqref{eq9} is established.

From \eqref{eq9} it follows the existence of a subsequence $\{g_{n_k}\}\subset \{g_n\}$ satisfying the condition:
$$
\int_0^1 G\Big(\frac{|\sum_{k=1}^\infty g_{n_k}(t)|}{2C_0}\Big)\,dt=
\sum_{k=1}^\infty \int_0^1 G\Big(\frac{|g_{n_k}(t)|}{2C_0}\Big)\,dt\le 1.$$
Hence, $\|\sum_{k=1}^\infty g_{n_k}\|_{L_G}\le 2C_0$. Summarizing, we see that for every sequence $(b_k)\in c_0$  
$$
\|(b_k)\|_{c_0}\le \Big\|\sum_{k=1}^\infty b_kg_{n_k}\Big\|_{L_G}\le 2C_0\|(b_k)\|_{c_0}.$$
Thus, $\{g_n\}$ contains a subsequence $\{g_{n_k}\}$ equivalent to the unit vector basis of $c_0$ with a constant independent of $\{g_n\}$. This means that (ii) is proved.

$(ii)\Rightarrow (iii)$.
Let $\{f_n\}_{n=1}^\infty$ be a normalized disjoint sequence from $L_F$. Note that for any Orlicz function $H$ and all non-negative $h_n,h\in L_H$, $n=1,2,\dots$, from the well-known properties of integral, the condition $h_n\uparrow h$ a.e. implies that $\|h_n\|_{L_H}\to  \|h\|_{L_H}$ as $n\to\infty$. Therefore, by \cite[Proposition~1.b.18]{LT-79}, the K\"{o}the dual $L_F'=L_G$ is a norming subspace of $L_F^*$. Consequently, we can find a disjoint sequence $\{g_n\}_{n=1}^\infty$ from $L_G$, $\|g_n\|_{L_G}=1$, such that $\int_0^1f_n(t)g_m(t)\,dt=\delta_{n,m}$, $n,m=1,2,\dots$. Then, by condition, there is a subsequence $\{g_{n_k}\}\subset\{g_n\}$ such that
$$
\Big\|\sum_{k=1}^\infty b_kg_{n_k}\Big\|_{L_G}\le C\|(b_k)\|_{c_0}$$
for a uniform constant $C>0$ and all $(b_k)\in c_0$. Hence, for any $m\in\mathbb{N}$ and all $a_k\in\mathbb{R}$ 
\begin{eqnarray*}
\Big\|\sum_{k=1}^m a_kf_{n_k}\Big\|_{L_F} &=& \sup\Big\{\int_0^1\Big(\sum_{k=1}^m |a_k|f_{n_k}(t)\Big)g(t)\,dt:\,g\in L_G, \|g\|_{L_G}\le 1\Big\}\\
&\ge & C^{-1}\int_0^1\Big(\sum_{k=1}^m |a_k|f_{n_k}(t)\Big)\Big(\sum_{k=1}^m g_{n_k}(t)\Big)\,dt\\
&=& C^{-1}\sum_{k=1}^m |a_k|\int_0^1 f_{n_k}(t)g_{n_k}(t)\,dt\ge C^{-1}\sum_{k=1}^m |a_k|.
\end{eqnarray*}
As a result, for every $m\in\mathbb{N}$ and all $a_k\in\mathbb{R}$, $k=1,2,\dots,m$ it follows that
$$
C^{-1}\sum_{k=1}^m |a_k|\le \Big\|\sum_{k=1}^m a_kf_{n_k}\Big\|_{L_F}\le \sum_{k=1}^m |a_k|.$$
Since the constant $C$ does not depend on a normalized disjoint sequence $\{f_n\}_{n=1}^\infty\subset L_F$ and $m\in\mathbb{N}$ is arbitrary, we get the desired result.

The equivalence of conditions $(iii)$, $(iv)$, $(v)$, and $(vi)$ is obtained in Theorem \ref{prop2} and Lemma~\ref{le1}. 

$(iv)\Rightarrow (vii)$. Let us show first that
\begin{equation}
\label{eq9a}
\frac12\le sF^{-1}(1/s)G^{-1}(1/s)\le 1,\;\;0<s\le 1.
\end{equation}
Indeed, on the one hand, since $\|\chi_{(0,s)}\|_{L_F}=1/F^{-1}(1/s)$ and $\|\chi_{(0,s)}\|_{L_G}=1/G^{-1}(1/s)$, then for $f=F^{-1}(1/s)\chi_{(0,s)}$, $g=G^{-1}(1/s)\chi_{(0,s)}$ we have $\|f\|_{L_F}=\|g\|_{L_G}=1$ and hence
$$
sF^{-1}(1/s)G^{-1}(1/s)=\int_0^1 f(t)g(t)\,dt\le \|f\|_{L_F}\|g\|_{L_G}=1,$$
which implies the right-hand side inequality in \eqref{eq9a}.
On the other hand, the facts that $\|\chi_{(0,s)}\|_{L_F}\|\chi_{(0,s)}\|_{L_F^*}=s$ \cite[Formula~(4.39)]{KPS} and $\|g\|_{L_G}\le \|g\|_{L_F^*}\le 2\|g\|_{L_G}$, $g\in L_G$ \cite[Formula~(9.24)]{KR} imply the left-hand side inequality. 

Let now $\{g_n\}_{n=1}^\infty$ be a normalized (in $L_G$) sequence of characteristic functions of disjoint sets $A_n\subset [0,1]$, $n=1,2,\dots$, that is, $g_n=G^{-1}(1/s_n)\chi_{A_n}$, where $s_n=m(A_n)$. If $f_n:=F^{-1}(1/s_n)\chi_{A_n}$, $n=1,2,\dots$, then by the hypothesis, there exists a subsequence $\{f_{n_k}\}\subset \{f_n\}$ such that 
\begin{equation}
\label{eq11}
\|(a_k)\|_{\ell_1}\le C\Big\|\sum_{k=1}^\infty a_kf_{n_k}\Big\|_{L_F}
\end{equation}
with some constant $C$ independent of given sets $A_n$, $n=1,2,\dots$

It is well known (see e.g. \cite[II.3.2]{KPS}) that the projection
$$
Pf:=\sum_{k=1}^\infty \frac{1}{s_{n_k}}\int_{A_{n_k}}f(t)\,dt\cdot \chi_{A_{n_k}}$$
is bounded in $L_F$ and $\|P\|_{L_F}=1$.   
Hence, from \eqref{eq11} and \eqref{eq9a} it follows
\begin{eqnarray*}
\Big\|\sum_{k=1}^\infty b_kg_{n_k}\Big\|_{L_G} &=& \sup\Big\{\int_0^1\Big(\sum_{k=1}^\infty b_kg_{n_k}(t)\Big)f(t)\,dt:\,\|f\|_{L_F}\le 1\Big\}\\
&=& \sup\Big\{\sum_{k=1}^\infty b_k G^{-1}(1/s_{n_k})\int_{A_{n_k}}f(t)\,dt:\,\|f\|_{L_F}\le 1\Big\}\\
&\le&\sup\Big\{\sum_{k=1}^\infty b_k G^{-1}(1/s_{n_k})\int_{A_{n_k}}f(t)\,dt:\,\|Pf\|_{L_F}\le 1\Big\}\\
&\le&\sup\Big\{\sum_{k=1}^\infty a_kb_k s_{n_k}F^{-1}(1/s_{n_k})G^{-1}(1/s_{n_k}):\,\|(a_k)\|_{\ell_1}\le C\Big\}\\
&\le& C\|(b_k)\|_{c_0}.
\end{eqnarray*}
Thus, each normalized sequence $\{g_n\}$ of characteristic functions of disjoint sets  contains a subsequence $\{g_{n_k}\}$ $C$-equivalent in $L_G$ to the unit vector basis of $c_0$ with a uniform constant $C$ for all $\{g_n\}$. As a result, the implication $(iv)\Rightarrow (vii)$ is proved.

$(vii)\Rightarrow (i)$. On the contrary, suppose that (i) does not hold. We need to prove that for each constant $C>0$ there are disjoint subsets $A_n$, $n=1,2,\dots$, of $[0,1]$ such that the sequence $\{g_n\}$, where $g_n:=G^{-1}(1/m(A_n))\chi_{A_n}$, does not contain any subsequence $C$-equivalent to the unit vector basis of $c_0$.

Let $C'>C$ be fixed. Since \eqref{eq8} fails for $C'$ we can find $M=M(C')$ such that for some $t_k\to\infty$ we have
$$
G(C't_k)\le MG(t_k),\;\;k=1,2,\dots$$ 
Then, for the inverse function $G^{-1}$, we obtain
\begin{equation}
\label{eq12}
C'G^{-1}(\tau_k/M)\le G^{-1}(\tau_k),\;\;k=1,2,\dots,
\end{equation}
where $\tau_k:=MG(t_k)\to\infty$ as $k\to\infty$.
Denoting $s_k:=1/\tau_k$ and passing to a subsequence, we can assume that $\sum_{k=1}^\infty s_k<1$. Let $A_k\subset [0,1]$, $k=1,2,\dots$, be disjoint sets, $m(A_k)=s_k$ and $g_k:=G^{-1}(1/s_k)\chi_{A_k}$. Assume that there is a subsequence $\{g_{k_i}\}\subset \{g_{k}\}$ such that
\begin{equation}
\label{eq13}
\Big\|\sum_{i=1}^\infty b_ig_{k_i}\Big\|_{L_G}\le C\|(b_i)\|_{c_0}.
\end{equation}
Then, in particular, for each $m\in\mathbb{N}$ we have
$$
\int_0^1 G\Big(\frac{|\sum_{i=1}^m g_{k_i}(t)|}{C'}\Big)\,dt=
\sum_{i=1}^m G(G^{-1}(1/s_{k_i})/C')s_{k_i}\le 1.$$
Combining this estimate with inequality \eqref{eq12} for $\tau_{k_i}=1/s_{k_i}$, $i=1,2,\dots,m$ and taking into account that $G$ increases, we get
$$
\sum_{i=1}^mG(G^{-1}(1/(Ms_{k_i})))s_{k_i}=\frac{m}{M}\le 1\;\;\mbox{for each}\;m=1,2,\dots,$$
which is a contradiction. Thus, \eqref{eq13} does not hold, and this  completes the proof of the implication and the theorem.
\end{proof}


The next result shows that $(1-DH)$-property is not preserved under duality in the class of Orlicz spaces.

\begin{theorem}
\label{th2}
There exists a $(1-DH)$ Orlicz space $L_\Phi$ such that the dual space $L_\Psi$ ($\Psi$ is the Young conjugate Orlicz function for $\Phi$) is not $DH$. 
\end{theorem}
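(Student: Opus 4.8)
The plan is to take for $\Phi$ the special Orlicz function constructed in Proposition~\ref{prop3}. Since $C_\Phi^\infty\cong\{t\}$, Theorem~\ref{th3new} immediately gives that $L_\Phi$ is $(1-DH)$, so the whole difficulty lies in showing that the conjugate space $L_\Psi$ is \emph{not} $DH$. I will do this by producing in $L_\Psi$ two normalized disjoint sequences with no pairwise equivalent subsequences: one spanning $c_0$ and one spanning an Orlicz sequence space $\ell_K$ with $K$ non-generate (so that $\ell_K\ne c_0$). The conceptual point --- and the reason $(1-DH)$ need not dualize --- is that although every $H\in E_\Phi^\infty\subset C_\Phi^\infty$ satisfies $H(u)\asymp u$, the equivalence constants blow up (this is precisely $E_\Phi^\infty\not\equiv\{t\}$), and Young conjugation does not preserve such non-uniform equivalence.

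For the $c_0$-sequence I would first observe that every $K\in E_\Psi^\infty$ is generate. Indeed, a normalized sequence of characteristic functions $g_n=\Psi^{-1}(1/s_n)\chi_{A_n}$ in $L_\Psi$ is biorthogonal (up to the constants in \eqref{eq9a}) to the sequence $f_n=\Phi^{-1}(1/s_n)\chi_{A_n}$ in the K\"othe dual $L_\Phi=L_\Psi'$; since $\{f_n\}$ spans $\ell_H$ with $H\in E_\Phi^\infty$ and $H(u)\asymp u$, one has $\ell_H\cong\ell_1$, and repeating the duality computation from the implication $(iv)\Rightarrow(vii)$ of Theorem~\ref{th1} (now with a non-uniform constant) shows that $\{g_n\}$ satisfies an upper $c_0$-estimate. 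Hence $\ell_K\cong c_0$ and $K$ is generate (Remark~\ref{rem1}). In particular any normalized disjoint sequence of characteristic functions in $L_\Psi$ has, after passing to a subsequence, span $c_0$.

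For the non-$c_0$ sequence I would exploit the ``bad'' functions of Proposition~\ref{prop3}. Fix the dilation points $t_s=2^{2^{k_i^{j_s}}}$ used there, put $s_s=1/\Phi(t_s)$ and $\tau_s=\Psi^{-1}(1/s_s)$, and let $K_i(v)=\lim_s\Psi(v\tau_s)/\Psi(\tau_s)\in E_\Psi^\infty$ be the companion of $H_i\in E_\Phi^\infty$. From Young's inequality together with the lower bound $s_s t_s\tau_s\ge\tfrac12$ in \eqref{eq9a} one gets, after passing to the limit, the estimate $H_i(u)+K_i(v)\ge uv/2$ for $0<u,v\le1$; inserting $u=2^{-r_i}$ and the value $H_i(2^{-r_i})\asymp 2^{-r_i-r_{i-1}}$ established in the proof of Proposition~\ref{prop3} yields $K_i(v)\ge 2^{-r_i}\bigl(v/2-2^{-r_{i-1}}\bigr)$, which is strictly positive once $v>2^{1-r_{i-1}}$. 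Choosing $\lambda_i>0$ with $\sum_i\lambda_i=1$, the series $K:=\sum_i\lambda_i K_i$ converges uniformly on $[0,1]$ (since $0\le K_i(v)\le v$) and therefore belongs to $C_\Psi^\infty=\overline{{\rm conv}}\,E_\Psi^\infty$; moreover $K$ is non-generate, because for every $v>0$ one may pick $i$ with $2^{1-r_{i-1}}<v$ (possible as $r_{i-1}\to\infty$), whence $K(v)\ge\lambda_i K_i(v)>0$. By the construction at the start of the proof of Theorem~\ref{th3new} there is then a normalized disjoint sequence in $L_\Psi$ equivalent to the unit vector basis of $\ell_K\ne c_0$.

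Finally, the $c_0$-sequence and the $\ell_K$-sequence can have no equivalent subsequences, because $\ell_K\not\cong c_0$ for non-generate $K$; hence $L_\Psi$ is not $DH$. The main obstacle is the duality bookkeeping of the second paragraph --- transferring the relation $H(u)\asymp u$ in $L_\Phi$ into the degeneracy of $E_\Psi^\infty$ while keeping careful track of the non-uniform constants --- together with the passage to the limit in the Young estimate of the third paragraph; the positivity computation for $K$ is then routine.
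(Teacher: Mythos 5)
Your proposal is correct, but it reaches the crucial conclusion --- that $L_\Psi$ contains a normalized disjoint sequence having no subsequence equivalent to the unit vector basis of $c_0$ --- by a genuinely different route than the paper. The paper argues abstractly: since $E_\Phi^\infty\not\equiv\{t\}$, Theorem~\ref{th1} (the equivalence of its conditions (ii) and (vi)) shows that $L_\Psi$ is not \emph{uniformly} $(\infty-DH)$, and Corollary~\ref{cor2new}, which rests on the Knaust--Odell results through Proposition~\ref{prop5}, upgrades this to the failure of $(\infty-DH)$ itself; you instead transfer the quantitative degeneracy built into Proposition~\ref{prop3} directly through Young's inequality. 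Your key estimate is sound: since $\Psi(\tau_s)=\Phi(t_s)$, Young's inequality gives $\Phi(ut_s)/\Phi(t_s)+\Psi(v\tau_s)/\Psi(\tau_s)\ge uv\,s_st_s\tau_s\ge uv/2$ by \eqref{eq9a}, and together with the bound $H_i(2^{-r_i})\le 4\cdot 2^{-r_i-r_{i-1}}$ from the proof of Proposition~\ref{prop3} this forces $K_i(v)>0$ once $v>2^{3-r_{i-1}}$, so that $K=\sum_i\lambda_iK_i\in C_\Psi^\infty$ is non-generate because $r_{i-1}\to\infty$; realizing $\ell_K$ by a disjoint sequence via the modular construction invoked in the proof of Theorem~\ref{th3new} (which requires no $\Delta_2^\infty$-condition on $\Psi$ --- important, since $\Psi$ necessarily fails it here) completes that half. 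Both proofs then use the same ingredient, the argument of $(iv)\Rightarrow(vii)$ in Theorem~\ref{th1}, to see that normalized characteristic functions span $c_0$, and both conclude non-$DH$ by playing the two types of sequences against each other. What your route buys: it is constructive and essentially self-contained, avoiding the Knaust--Odell machinery and most of Theorem~\ref{th1}, and it exhibits explicitly the mechanism behind the failure of duality, namely that every element of $E_\Psi^\infty$ is generate while the closed convex hull $C_\Psi^\infty$ contains a non-generate function (an infinite convex combination of generate functions whose degeneration points tend to $0$). What the paper's route buys: given Theorems~\ref{th1}, \ref{prop2} and Corollary~\ref{cor2new}, the proof takes only a few lines. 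Two minor repairs to your write-up: the limits defining $K_i$ exist only after passing to a further subsequence (using compactness of the sets $E_{\Psi,A}^\infty$ in $C[0,1]$), and the constant hidden in $H_i(2^{-r_i})\asymp 2^{-r_i-r_{i-1}}$ shifts your positivity threshold from $2^{1-r_{i-1}}$ to $2^{3-r_{i-1}}$; neither affects the argument.
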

\begin{proof}
Let $\Phi$ be the Orlicz function from Proposition~\ref{prop3}. Then, in view of Lemma~\ref{le2}, every disjoint normalized sequence $\{f_n\}_{n=1}^\infty$ from $L_\Phi$ contains a subsequence $\{f_{n_k}\}_{k=1}^\infty$ that is $6$-equivalent to the unit vector basis of an Orlicz space $\ell_H$ for some $H\in C_\Phi^\infty$. Since $C_\Phi^\infty\cong\{t\}$ by Proposition~\ref{prop3}, we have that $H(u)\asymp u$, $0\le u\le 1$. Therefore, $\{f_{n_k}\}$ is equivalent to the unit vector basis of $\ell_1$. This implies that $L_\Phi$ is $(1-DH)$.

On the other hand, $E_\Phi^\infty\not\equiv\{t\}$ and hence, by Theorem~\ref{th1}, $L_\Psi$ fails to be uniformly $(\infty-DH)$. Thus, according to Corollary~\ref{cor2new}, $L_\Psi$ is not $(\infty-DH)$. Moreover, arguing in the same way as in the proof of implication $(iv)\Rightarrow (vii)$ of Theorem~\ref{th1}, one can readily check that $L_\Psi$ is restrictive $(\infty-DH)$. Combining these facts, we conclude that $L_\Psi$ fails to be $DH$.
\end{proof}

We restate now Theorem~\ref{th2} in the form, which is in a sharp contrast with Corollary~\ref{cor2new}, showing an essential difference between the $(1-DH)$- and $(\infty-DH)$-properties in Orlicz spaces.  

\begin{corollary}
\label{th3}
There exists a $(1-DH)$ Orlicz space $L_\Phi$, which fails to be uniformly $(1-DH)$.

Moreover, there is no constant $c$ such that for every sequence of disjoint sets $E_n\subset [0,1]$, $n=1,2,\dots$, there is a subsequence $E_{n_k}$, $k=1,2,\dots$, satisfying the inequality 
\begin{equation}
\label{eq1new}
\Big\|\sum_{k=1}^m \bar{\chi}_{E_{n_k}}\Big\|_{L_\Phi}\ge cm,\;\;m=1,2,\dots
\end{equation}
\end{corollary}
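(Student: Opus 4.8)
The plan is to take $\Phi$ to be exactly the Orlicz function supplied by Proposition~\ref{prop3}, so that $C_\Phi^\infty\cong\{t\}$ while $E_\Phi^\infty\not\equiv\{t\}$. For the first assertion I would argue as follows. Since $C_\Phi^\infty\cong\{t\}$, Theorem~\ref{th3new} (with $p=1$) immediately gives that $L_\Phi$ is $(1-DH)$. On the other hand, the equivalence $(a)\Leftrightarrow(d)$ in Theorem~\ref{prop2} identifies uniform $(1-DH)$ with the condition $E_\Phi^\infty\equiv\{t\}$; as the latter fails by Proposition~\ref{prop3}, the space $L_\Phi$ cannot be uniformly $(1-DH)$. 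This settles the first sentence of the corollary.

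For the ``Moreover'' part I would argue by contradiction, assuming that there is a single $c>0$ such that every disjoint sequence $\{E_n\}$ admits a subsequence $\{E_{n_k}\}$ with $\|\sum_{k=1}^m\bar{\chi}_{E_{n_k}}\|_{L_\Phi}\ge cm$ for all $m$. The idea is to feed into this hypothesis the test sequences manufactured in the opening part of the proof of Proposition~\ref{prop1}: for every $H\in E_\Phi^\infty$ and every $\varepsilon>0$ that construction yields disjoint sets $\{E_n\}$ for which $\{\bar{\chi}_{E_n}\}$ is $(1+\varepsilon)$-equivalent to the unit vector basis of the Orlicz sequence space $\ell_H$. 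Applying the hypothetical constant $c$ to this sequence produces a subsequence obeying the lower estimate $\|\sum_{k=1}^m\bar{\chi}_{E_{n_k}}\|_{L_\Phi}\ge cm$.

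The heart of the argument is then to convert this back into a uniform lower bound on the functions of $E_\Phi^\infty$. Since the unit vector basis of $\ell_H$ is symmetric, the upper half of the $(1+\varepsilon)$-equivalence gives $\|\sum_{k=1}^m\bar{\chi}_{E_{n_k}}\|_{L_\Phi}\le(1+\varepsilon)\|\sum_{k=1}^m e_k\|_{\ell_H}=(1+\varepsilon)/H^{-1}(1/m)$, so that $cm\le(1+\varepsilon)/H^{-1}(1/m)$, i.e. $H\big((1+\varepsilon)/(cm)\big)\ge 1/m$ for all $m$. Keeping $\varepsilon$ fixed (say $\varepsilon=1$) and using the monotonicity and convexity of $H$ to fill in the gaps between the discrete values $1/m$, I would obtain a constant $c'>0$, independent of $H$, with $H(u)\ge c'u$ for all sufficiently small $u$. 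Combined with the elementary bound $H(u)\le u$ valid for every $H\in E_\Phi^\infty$, this yields $E_\Phi^\infty\equiv\{t\}$, contradicting Proposition~\ref{prop3}.

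The main obstacle I anticipate is precisely this last conversion step: one must check that passing to a subsequence cannot rescue the lower estimate. This is what the symmetry of the basis of $\ell_H$ guarantees, for every subsequence of $\{\bar{\chi}_{E_n}\}$ is again equivalent (up to the symmetry constant) to the $\ell_H$-basis, so the partial-sum norms of any subsequence still behave like $1/H^{-1}(1/m)$ and carry the same information about $H$. Making the dependence of $c'$ on the uniform constant $c$, and not on $H$, explicit, and handling the passage from the discrete points $u_m=(1+\varepsilon)/(cm)$ to all small $u$, are the only genuinely technical points; both are routine once the symmetry observation is in place.
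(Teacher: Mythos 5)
Your proposal is correct and takes essentially the same route as the paper: the paper's proof uses the very same space (the $\Phi$ of Proposition~\ref{prop3}, taken via Theorem~\ref{th2}) and simply cites Theorem~\ref{prop2}, whose relevant equivalences $(a)\Leftrightarrow(b)\Leftrightarrow(d)$ are themselves established by exactly the constant-tracking in the proof of Proposition~\ref{prop1} that you unfold explicitly for the ``Moreover'' part. The details you supply --- the $1$-symmetry of the unit vector basis of $\ell_H$, the passage from the discrete points $(1+\varepsilon)/(cm)$ to all small $u$, and the automatic upper bound $H(u)\le u$ for $H\in E_\Phi^\infty$ --- are precisely what the paper's phrase ``an inspection of constants'' compresses, so there is no genuine difference in approach.
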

\begin{proof}
Taking for $L_\Phi$ the same space as in Theorem~\ref{th2} and applying Theorem~\ref{prop2}, we obtain immediately both assertions of the corollary.
\end{proof}

\begin{rem}
Condition \eqref{eq8} is known already a rather long time (see \cite{A19c}).
Proposition~5.8 from \cite{AKS} (see also the survey \cite[Theorem~3.4]{A19c}) reads that it holds if and only if the Orlicz space $L_F$ is $(1-DH)$, where $F$ is the Young conjugate function for $G$. In fact, by Theorem~\ref{th1}, condition \eqref{eq8} is equivalent to the uniform $(1-DH)$-property of the space $L_F$. Therefore, Corollary~\ref{th3} 
shows that \cite[Proposition~5.8]{AKS} is not true in full generality. From this discussion it follows that condition \eqref{eq8} is fulfilled if and only if the Orlicz space $L_F$ satisfies some uniform version of the so-called Dunford-Pettis criterion of weak compactness (see \cite{A19c}). 
We plan to study this issue in a next paper.


\end{rem}

Proceed now with the reflexive case. Recall that a symmetric space $X$ is said to be a disjointly complemented (in brief, $DC$) if every disjoint sequence in $X$ has a subsequence whose span is complemented in $X$.

\begin{theorem}
\label{th1new}
Let $1<p<\infty$, $1/p+1/q=1$, and let $F$ and $G$ be mutually Young conjugate Orlicz functions. Suppose that $L_F$ is a $(p-DH)$ space. Then the following conditions are equivalent:

(i) $L_G$ is $(q-DH)$;

(ii) $L_G$ is a $DC$ space;

(iii) $L_F$ is uniformly $(p-DH)$;

(iv) $E_F^\infty\equiv\{t^p\}$.
\end{theorem}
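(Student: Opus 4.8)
The plan is to establish the cycle of implications $(iii)\Leftrightarrow(iv)$, $(iii)\Rightarrow(i)$, $(i)\Rightarrow(ii)$, and $(ii)\Rightarrow(iii)$, since $(iii)\Leftrightarrow(iv)$ is already delivered by Theorem~\ref{prop2}. The fundamental leverage point is the hypothesis that $L_F$ is $(p-DH)$: by Theorem~\ref{th3new} this means $C_F^\infty\cong\{t^p\}$, so the unresolved gap between $(iii)$ and mere $(p-DH)$ is precisely the gap between the \emph{uniform} condition $E_F^\infty\equiv\{t^p\}$ and the pointwise condition $C_F^\infty\cong\{t^p\}$, exactly the phenomenon isolated in Proposition~\ref{prop3} and Corollary~\ref{cor3new}. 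Thus the whole theorem hinges on transferring uniformity across duality, and I expect the genuinely hard implication to be the one that forces uniformity, namely $(i)\Rightarrow(iii)$ (routed through $(ii)$).

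For $(iii)\Rightarrow(i)$ I would dualize the uniform estimate. Assuming $E_F^\infty\equiv\{t^p\}$, I would first record the analogue of \eqref{eq9a}, that $sF^{-1}(1/s)G^{-1}(1/s)\asymp 1$ on $(0,1]$, together with the boundedness of the conditional-expectation projection $P$ onto the span of disjoint characteristic functions (as used in the $(iv)\Rightarrow(vii)$ argument of Theorem~\ref{th1}). Given a normalized disjoint sequence in $L_G$, I would split each element into its "large-values" and "small-values" parts as in Lemma~\ref{le2}, reduce to characteristic functions via Remark~\ref{rem1}, and then use the uniform restricted estimate \eqref{eq1new} for $L_F$ to dualize into a uniform $\ell_q$-estimate for $L_G$; here the duality $\ell_H^*=\ell_{H^*}$ together with $H(u)\asymp u^p$ uniformly converts the $\ell_p$-behaviour on the predual side into $\ell_q$-behaviour, giving not merely $(q-DH)$ but its uniform version, whence $(i)$. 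The implication $(i)\Rightarrow(ii)$ should be essentially formal: if $L_G$ is $(q-DH)$ with $1<q<\infty$, then every normalized disjoint sequence has a subsequence equivalent to the $\ell_q$-basis, which spans a complemented subspace in the reflexive space $L_G$ (the band projection onto the disjoint supports is bounded, and an $\ell_q$-span in a reflexive lattice is complemented), so $L_G$ is $DC$.

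The main obstacle, and the implication I would attack most carefully, is $(ii)\Rightarrow(iii)$. The strategy is contrapositive: assume $L_F$ is $(p-DH)$ but \emph{not} uniformly so, i.e.\ $C_F^\infty\cong\{t^p\}$ yet $E_F^\infty\not\equiv\{t^p\}$, and manufacture a disjoint sequence in $L_G$ whose every subsequence spans an uncomplemented subspace. The failure of uniformity supplies, for each $n$, a function $H_n\in E_F^\infty$ and a point $u_n$ with $H_n(u_n)\le n^{-1}u_n^p$ (the Lemma~\ref{le1}-style extraction, now for exponent $p$), which by Proposition~\ref{prop1} realizes badly-normed disjoint characteristic functions. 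Passing to $L_G$ via conjugacy, these produce disjoint sequences whose $\ell_q$-constants blow up along every subsequence; the geometric content is that the associated band projections have norms tending to infinity, obstructing complementation. Making "no complemented subsequence" precise is the delicate part: I would invoke the reflexivity of $L_G$ (so $F,G\in\Delta_2^\infty$) to identify $L_G^*$ with $L_F$ and argue that a bounded projection onto the span of such a disjoint sequence would yield, by restriction and the self-duality of the construction, a uniform $\ell_q$-estimate, contradicting the blow-up; this reverses $(iii)\Rightarrow(i)\Rightarrow(ii)$ quantitatively. The real work is controlling constants through this chain so that the non-uniformity detected in $E_F^\infty$ survives the passage to complementation in the dual.
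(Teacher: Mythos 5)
Your reduction $(iii)\Leftrightarrow(iv)$ via Theorem~\ref{prop2} and your dualization step $(iii)\Rightarrow(i)$ do match the paper, which proves $(iv)\Rightarrow(i)$ by transferring $E_F^\infty\equiv\{t^p\}$ to $E_G^\infty\equiv\{t^q\}$ exactly as in the implication $(iv)\Rightarrow(vii)$ of Theorem~\ref{th1} and then applying Theorem~\ref{prop2}. But the step you yourself identify as the linchpin, the direct contrapositive proof of $(ii)\Rightarrow(iii)$, has a genuine gap --- in fact two. First, the witnesses you propose cannot work: you plan to turn the failure of $E_F^\infty\equiv\{t^p\}$ into ``badly-normed disjoint characteristic functions'' and claim the associated sequences in $L_G$ obstruct complementation. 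However, the span of \emph{any} sequence of normalized characteristic functions of disjoint sets is $1$-complemented in every symmetric space, via the conditional-expectation projection $Pf=\sum_k \frac{1}{m(A_k)}\int_{A_k}f\,dt\cdot\chi_{A_k}$ --- the very projection with $\|P\|=1$ that the paper uses in the proof of Theorem~\ref{th1}. So sequences of characteristic functions can never witness failure of $DC$, no matter how badly their $\ell_q$-constants blow up; this is precisely why Theorem~\ref{th2new} can produce a space that is restricted $(q-DH)$ yet fails $DC$. (Relatedly, ``band projections have norms tending to infinity'' cannot be the mechanism: band projections are always contractive; only projections onto closed spans of sequences are at issue.) Second, even working with genuinely non-characteristic disjoint functions, negating $DC$ requires a \emph{single} disjoint sequence no subsequence of which spans a complemented subspace, whereas non-uniformity only supplies, for each $n$, a separate sequence with equivalence constant at least $n$; any naive interleaving of these blocks has subsequences lying inside one good block. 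Collapsing such an array $\{x_i^n\}$ into one bad sequence is exactly the Knaust--Odell $\ell_p$-array procedure, which the paper packages as Proposition~\ref{prop5} and which your proposal never invokes or replaces.

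The paper in fact avoids your hard step altogether: it cites \cite[Theorem~4.5]{FHT-survey} for $(i)\Leftrightarrow(ii)$, and the implication it proves is $(i)\Rightarrow(iii)$, as follows. Proposition~\ref{prop5} (the Knaust--Odell uniformization) upgrades the two non-uniform hypotheses to uniform upper estimates --- a $C_1$-upper $\ell_p$-estimate for subsequences of disjoint normalized sequences in $L_F$ and a $C_2$-upper $\ell_q$-estimate in $L_G$; then, pairing a given disjoint normalized $\{f_n\}\subset L_F$ with biorthogonal disjoint norming functionals $\{g_n\}\subset L_G$, the uniform upper $\ell_q$-estimate for $\{g_{n_k}\}$ dualizes into a uniform lower $\ell_p$-estimate for $\{f_{n_k}\}$, giving $(iii)$. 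A final remark on your $(i)\Rightarrow(ii)$: the principle you invoke --- that a disjoint sequence equivalent to the $\ell_q$-basis in a reflexive Banach lattice automatically spans a complemented subspace --- is false in general (there exist reflexive $DH$ Banach lattices ordered by basis which fail to be $DC$, see \cite{FHSTT}); the implication does hold here, but only because of the standing hypothesis: the $(p-DH)$-ness of $L_F$ gives the biorthogonal disjoint functionals a uniform upper $\ell_p$-estimate after passing to a subsequence, and that is what makes the natural biorthogonal projection onto the span bounded.
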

\begin{proof}
First, from \cite[Theorem~4.5]{FHT-survey} it follows that conditions $(i)$ and $(ii)$ are equivalent.

$(i)\Rightarrow (iii)$. 
By the hypothesis and Proposition~\ref{prop5}, there are constants $C_1$ and $C_2$ such that every disjoint normalized sequence in $L_F$ (resp. $L_G$) contains a subsequence admitting the $C_1$-upper $\ell_p$-estimate (resp. $C_2$-upper $\ell_q$-estimate). 

Let $\{f_n\}_{n=1}^\infty$ be any disjoint normalized sequence in $L_F$. As above, we find a disjoint sequence $\{g_n\}_{n=1}^\infty$ from $L_G$, $\|g_n\|_{L_G}=1$, such that $\int_0^1 f_n(t)g_m(t)\,dt=\delta_{n,m}$, $n,m=1,2,\dots$. If $\{f_{n_k}\}\subset \{f_n\}$ is a subsequence that admits the $C_2$-upper $\ell_q$-estimate, then we have
\begin{eqnarray*} 
\Big\|\sum_{k=1}^\infty a_kf_{n_k}\Big\|_{L_F} &\ge& \sup\Big\{\int_0^1\Big(\sum_{k=1}^\infty a_kf_{n_k}(t)\Big)\Big(\sum_{k=1}^\infty b_kg_{n_k}(t)\Big)\,dt:\,\Big\|\sum_{k=1}^\infty b_kg_{n_k}\Big\|_{L_G}\le 1\Big\}\\
&\ge& \sup\Big\{\sum_{k=1}^\infty a_kb_k:\,\|(b_k)\|_{\ell_q}\le C_2^{-1}\Big\}=C_2^{-1}\|(a_k)\|_{\ell_p}.
\end{eqnarray*}
Thus, every disjoint normalized sequence in $L_F$ contains a subsequence satisfying the $C_2^{-1}$-lower $\ell_p$-estimate. This completes the proof of the implication.

Since the implication $(iii)\Rightarrow (iv)$ is an immediate consequence of Theorem~\ref{prop2}, we need only to prove that $(iv)$ implies $(i)$. 

Let us assume that $E_F^\infty\equiv\{t^p\}$. Then, arguing in the same way as in the proof of the implication $(iv)\Rightarrow (vii)$ of Theorem~\ref{th1}, we get that $E_G^\infty\equiv\{t^q\}$. Combining this fact with Theorem~\ref{prop2}, we obtain that $L_G$ is uniformly $(q-DH)$.  
\end{proof}

\begin{rem}
Following to \cite{FHSTT}, we say that a Banach lattice $X$ with a basis $\{x_n\}$ such that $\sum_{n=1}^\infty a_nx_n\ge 0$ if and only if $a_n\ge 0$ for all $n$ is a {\it Banach lattice ordered by basis}. Clearly, if $X$ is ordered by basis, then the vectors $x_n$, $n=1,2,\dots$, are disjoint. In \cite[Proposition~6]{FHSTT}, it is proved that if such a Banach lattice $X$ is $(p-DH)$ and $X^*$ is $(q-DH)$ ($1<p<\infty$, $1/p+1/q=1$), then $X$ is uniformly $(p-DH)$. Theorem~\ref{th1new} indicates that in the case of Orlicz spaces analogous result holds together with its converse for "usual"\:a.e. order of functions.
\end{rem}

Now we are able to show now that $(p-DH)$-property fails to be preserved under duality in the class of Orlicz spaces also in the case when $1<p<\infty$ (cf. \cite[p.~5863]{FHSTT} and \cite[p.~6]{FHT-survey}, where the opposite is asserted; see Section~\ref{Intro}). 

\begin{theorem}
\label{th2new}
For every $1<p<\infty$ there exists a $(p-DH)$ Orlicz space $L_{\Phi_p}$ such that the dual space $L_{\Psi_q}$, where $\Psi_q$ is the Young conjugate Orlicz function for $\Phi_p$, is restricted $(q-DH)$ but is not $DH$ and not $DC$. 
\end{theorem}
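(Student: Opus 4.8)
The plan is to construct the space explicitly from the building blocks already established and then verify the three required properties in turn. Starting from the Orlicz function $\Phi_p$ furnished by Corollary~\ref{cor3new}, for which $C_{\Phi_p}^\infty\cong\{t^p\}$ but $E_{\Phi_p}^\infty\not\equiv\{t^p\}$, I take $L_{\Phi_p}$ together with its dual $L_{\Psi_q}$, where $\Psi_q$ is the Young conjugate of $\Phi_p$. This parallels the structure of Theorem~\ref{th2}, now transported from the endpoint case $p=1$ to the reflexive range $1<p<\infty$ via the power-substitution trick of Corollary~\ref{cor3new}.

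First I would establish that $L_{\Phi_p}$ is $(p-DH)$. By Theorem~\ref{th3new}, this is equivalent to $C_{\Phi_p}^\infty\cong\{t^p\}$, which is exactly the content of Corollary~\ref{cor3new}. Hence $L_{\Phi_p}$ is a $(p-DH)$ Orlicz space with no further work.

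Next I would show that $L_{\Psi_q}$ fails to be $DH$. The key observation is that $E_{\Phi_p}^\infty\not\equiv\{t^p\}$, so by Theorem~\ref{prop2} the space $L_{\Phi_p}$ is $(p-DH)$ but \emph{not} uniformly $(p-DH)$; equivalently, condition $(iii)$ of Theorem~\ref{th1new} fails. Since that theorem lists $(i)$ $L_{\Psi_q}$ is $(q-DH)$, $(ii)$ $L_{\Psi_q}$ is $DC$, and $(iii)$ $L_{\Phi_p}$ is uniformly $(p-DH)$ as equivalent conditions (under the standing hypothesis that $L_{\Phi_p}$ is $(p-DH)$, already verified), the failure of $(iii)$ forces the failure of both $(i)$ and $(ii)$. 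Thus $L_{\Psi_q}$ is neither $(q-DH)$ nor $DC$. To upgrade ``not $(q-DH)$'' to ``not $DH$'', I would note that $L_{\Psi_q}$ is reflexive (both $\Phi_p$ and $\Psi_q$ satisfy the relevant $\Delta_2$-conditions in the reflexive range), so that by Lemma~\ref{le2} every disjoint normalized sequence has a subsequence equivalent to the unit vector basis of some $\ell_H$ with $H\in C_{\Psi_q}^\infty$; were $L_{\Psi_q}$ to be $DH$, all such $H$ would be mutually equivalent, which combined with the reflexivity-forced bounds $1<q<\infty$ would make $L_{\Psi_q}$ actually $(q-DH)$, a contradiction.

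Finally I would verify that $L_{\Psi_q}$ is restricted $(q-DH)$. Here the argument mirrors the implication $(iv)\Rightarrow(vii)$ of Theorem~\ref{th1}: from $E_{\Phi_p}^\infty\cong\{t^p\}$ (which holds since $C_{\Phi_p}^\infty\cong\{t^p\}$ forces $E_{\Phi_p}^\infty\cong\{t^p\}$) one deduces, by the duality computation using the norming estimate $\tfrac12\le sF^{-1}(1/s)G^{-1}(1/s)\le 1$ and the boundedness of the conditional expectation projection $P$, that $E_{\Psi_q}^\infty\cong\{t^q\}$. By Proposition~\ref{prop1}, this last condition is precisely the criterion for $L_{\Psi_q}$ to be restricted $(q-DH)$. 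The main obstacle I anticipate is the final step: the clean duality passage in Theorem~\ref{th1} was carried out at the \emph{uniform} level ($\equiv$), whereas here I only have the non-uniform relation $\cong$, so I must check that the set-level characterization in Proposition~\ref{prop1}(c), $E_{\Psi_q}^\infty\cong\{t^q\}$, can be obtained from $E_{\Phi_p}^\infty\cong\{t^p\}$ function-by-function rather than uniformly; tracking that the constants depend only on the individual $H$ (and on the absolute constants in \eqref{eq9a} and $\|P\|=1$) is the delicate bookkeeping that makes the conclusion go through.
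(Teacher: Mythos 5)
Your proposal follows essentially the same route as the paper's proof: take $\Phi_p$ from Corollary~\ref{cor3new}, deduce that $L_{\Phi_p}$ is $(p-DH)$ from $C_{\Phi_p}^\infty\cong\{t^p\}$ (Theorem~\ref{th3new}), obtain the restricted $(q-DH)$ property of $L_{\Psi_q}$ by running the implication $(iv)\Rightarrow(vii)$ of Theorem~\ref{th1} non-uniformly (with constants depending on the individual sequence, exactly the bookkeeping you flag), and derive the failure of $(q-DH)$ and of $DC$ from Theorem~\ref{th1new} using $E_{\Phi_p}^\infty\not\equiv\{t^p\}$. The one imprecision is your upgrade from ``not $(q-DH)$'' to ``not $DH$'': reflexivity does not pin the exponent, and what actually closes the argument (as in the paper) is the restricted property itself --- if $L_{\Psi_q}$ were $DH$, then by Lemma~\ref{le2} all $H\in C_{\Psi_q}^\infty$ would be mutually equivalent, so $L_{\Psi_q}$ would be $(r-DH)$ for some $r$, and then restricted $(r-DH)$ together with the already established restricted $(q-DH)$ forces $r=q$, contradicting ``not $(q-DH)$''.
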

\begin{proof}
Let $\Phi_p$ be the Orlicz function from Corollary~\ref{cor3new}. First, by Lemma~\ref{le2}, every disjoint normalized sequence $\{f_n\}_{n=1}^\infty$ from $L_{\Phi_p}$ contains a subsequence $\{f_{n_k}\}_{k=1}^\infty$ $6$-equivalent to the unit vector basis of an Orlicz space $\ell_H$ for some $H\in C_{\Phi_p}^\infty$. Since $C_{\Phi_p}^\infty\cong\{t^p\}$ by Corollary~\ref{cor3new}, then $H(u)\asymp u^p$, $0\le u\le 1$. Therefore, $\{f_{n_k}\}$ is equivalent to the unit vector $\ell_p$-basis. This implies that $L_{\Phi_p}$ is $(p-DH)$. In particular, $L_{\Phi_p}$ is restricted $(p-DH)$, and in the same way as in the proof of the implication $(iv)\Rightarrow (vii)$ of Theorem~\ref{th1},
it can be shown that the dual space $L_{\Psi_q}$ is restricted $(q-DH)$ as well (see also \cite[Proposition~3.7]{HST}).

On the other hand, since $E_{\Phi_p}^\infty\not\equiv\{t^p\}$, from Theorem~\ref{th1new} it follows that $L_{\Psi_q}$ is not $(q-DH)$. Since $L_{\Psi_q}$ is restricted $(q-DH)$, this yields that $L_{\Psi_q}$ fails to be $DH$.
\end{proof}

Theorem 4.1 in the paper  \cite{bib:04} reads that an Orlicz space $L_F$ is $DH$ if and only if  $E_{F}^\infty\cong\{\varphi\}$ for a certain function $\varphi$. Theorem 5.1 in \cite{HST}, the proof of which is based on the above result, claims that every restricted $(2-DH)$ Orlicz space is $(2-DH)$. However, combining Theorem~\ref{th2new} with Proposition~\ref{prop1}, we obtain the following completely different result.

\begin{corollary}
\label{restricted and not}
Let $1<q<\infty$. There exists an Orlicz function $\Psi_q$ such that $E_{\Psi_q}^\infty\cong\{t^q\}$ and the Orlicz space $L_{\Psi_q}$ is restricted $(q-DH)$, but $C_{\Psi_q}^\infty\not\cong\{t^q\}$ and $L_{\Psi_q}$ fails to be $(DH)$.
\end{corollary}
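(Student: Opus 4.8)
The plan is to show that Corollary~\ref{restricted and not} follows by combining the Orlicz function $\Psi_q$ already constructed in Theorem~\ref{th2new} with the characterizations of restricted $(q-DH)$ and $(q-DH)$ in terms of the sets $E_F^\infty$ and $C_F^\infty$. The key observation is that all four assertions of the corollary have already been established piecemeal in the preceding results, so the work consists of reassembling them rather than proving anything new. Concretely, I would take $\Psi_q$ to be the Young conjugate of the function $\Phi_p$ from Corollary~\ref{cor3new}, exactly as in Theorem~\ref{th2new}, where $1/p+1/q=1$.

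First I would record that $L_{\Psi_q}$ is restricted $(q-DH)$: this was shown explicitly inside the proof of Theorem~\ref{th2new} (arguing as in the implication $(iv)\Rightarrow(vii)$ of Theorem~\ref{th1}). By Proposition~\ref{prop1}, the restricted $(q-DH)$-property of $L_{\Psi_q}$ is equivalent to the condition $E_{\Psi_q}^\infty\cong\{t^q\}$, which therefore holds. This gives the first half of the statement at once.

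Next I would address the negative half. Theorem~\ref{th2new} already asserts that $L_{\Psi_q}$ is not $DH$, so by Theorem~\ref{th3new} we cannot have $C_{\Psi_q}^\infty\cong\{t^q\}$; indeed, if it did, $L_{\Psi_q}$ would be $(q-DH)$ and hence $DH$, a contradiction. Thus $C_{\Psi_q}^\infty\not\cong\{t^q\}$ and $L_{\Psi_q}$ fails to be $DH$, completing all four claims. I would remark that this is precisely the phenomenon forbidden by \cite[Theorem~4.1]{bib:04}, which is why the corollary contradicts that earlier result: the condition $E_F^\infty\cong\{\varphi\}$ does not force $C_F^\infty\cong\{\varphi\}$, the very point isolated in Proposition~\ref{prop3} and its corollary.

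The only mild subtlety, and the step I would be most careful about, is making sure that the two descriptions (restricted via $E^\infty$, full via $C^\infty$) are genuinely applied to the \emph{same} function $\Psi_q$ and that the exponent bookkeeping $1/p+1/q=1$ is consistent; everything else is a direct citation. Since $\Psi_q$ is fixed as the conjugate of $\Phi_p$, and both Proposition~\ref{prop1} and Theorem~\ref{th3new} are stated for an arbitrary Orlicz function, no additional hypotheses need to be checked. There is no real obstacle here: the difficulty of the paper was concentrated in building $\Phi$ in Proposition~\ref{prop3}, and the corollary merely harvests its consequences.
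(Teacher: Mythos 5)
Your proposal is correct and takes essentially the same route as the paper, which likewise obtains the corollary by taking $\Psi_q$ from Theorem~\ref{th2new} and combining its conclusions (restricted $(q-DH)$, not $DH$) with Proposition~\ref{prop1} to get $E_{\Psi_q}^\infty\cong\{t^q\}$ and with Theorem~\ref{th3new} to rule out $C_{\Psi_q}^\infty\cong\{t^q\}$. There is no gap; the corollary is indeed just a reassembly of the earlier results.
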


\end{document}